\numberwithin{equation}{section}
\theoremstyle{plain} 
\newtheorem{thm}{Theorem}[section]
\newtheorem*{thm*}{Theorem}
\newtheorem*{cjt}{Conjecture} 
\newtheorem{cor}[thm]{Corollary}
\newtheorem{prop}[thm]{Proposition}
\newtheorem*{prop*}{Proposition}
\theoremstyle{definition}
\newtheorem{defn}{Definition}
\newtheorem*{ackn}{Acknowledgement}
\theoremstyle{remark}
\begin{document}
\title{Primitive characters of odd order groups}

\author[C. Marchi]{Claudio Marchi}

\address{School of Mathematics \\ University of Manchester \\
Oxford Road \\  M13 9PL \\ Manchester \\
United Kingdom} 

\email{claudio.marchi@manchester.ac.uk}

 \subjclass[2000]{20C15, 20C99}

 \keywords{primitive characters, conjugacy classes}

\begin{abstract} Let $G$ be a finite group of odd order. We show that if $\chi$ is an irreducible primitive character of $G$ then for all primes $p$ dividing the order of $G$ there is a conjugacy class such that the $p-$part of $\chi(1)$ divides the size of that conjugacy class. We also show that for some classes of groups the entire degree of an irreducible primitive character $\chi$ divides the size of a conjugacy class.
\end{abstract}

\maketitle

\section{Introduction}
Irreducible characters and conjugacy classes are related by many arithmetical properties. The most well-known is an immediate consequence of the Ito-Michler theorem saying that, if a prime number $p$ divides the degree of an irreducible character then $p$ divides the size of at least a conjugacy class. This result has been extended by Casolo and Dolfi in \cite{casolo2009products}, to show that if $pq$ divides the degree of an irreducible character then there exists a conjugacy class whose size is divisible by $pq$.\\
\indent In \cite{isaacs2006fixed} Isaacs, Keller, Meierfrankenfeld and Moreto' conjectured the following:
\begin{cjt}[\cite{isaacs2006fixed},Conjecture C]
Let $\chi$ be an irreducible primitive character of a finite group $G$. Then $\chi(1)$ divides $\vert cl_G(g)\vert$ for some element $g\in G$.
\end{cjt}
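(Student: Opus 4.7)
The plan is to proceed by induction on $|G|$ and to exploit Clifford theory, which is especially sharp under primitivity. A primitive $\chi$ is in particular quasi-primitive, so for every $N\trianglelefteq G$ the restriction $\chi_N$ is a multiple of a single $G$-invariant $\theta\in\operatorname{Irr}(N)$. Applied to an abelian normal subgroup $A$ this forces $\chi_A$ to be a multiple of a single linear character, so $A\le Z(\chi)$. Quotienting by $\ker\chi$, we may assume $\chi$ is faithful, and then $Z(G)=Z(\chi)$ is cyclic. These reductions fix the arena in which the remaining argument takes place.

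The first substantive step is to prove the prime-local form: for each prime $p\mid\chi(1)$ there is a class $C_p\subseteq G$ with $\chi(1)_p\mid|C_p|$. This is essentially the odd-order theorem of the present paper, and for fixed $p$ it can be attacked by induction on $|G|$ via a minimal normal subgroup $N$. If the common irreducible constituent $\theta$ of $\chi_N$ is trivial one descends to $G/N$; otherwise one uses Ito--Michler together with the $G$-invariance of $\theta$ to locate an element whose $p$-local centralizer is small enough to force the required divisibility. In the odd-order setting, Feit--Thompson solvability and Hall-subgroup methods provide the flexibility needed to produce such elements.

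The main obstacle, and the reason the full conjecture remains open, is the passage from the prime-local classes $C_p$ to a single class $C$ with $\chi(1)\mid|C|$. The difficulty is structural rather than arithmetic: an element whose centralizer is small at $p$ may be forced to have a large centralizer at some other prime $q$, so a Chinese-Remainder-style recombination fails. The natural attempt is to locate a ``regular'' element $g$ sitting outside $Z(\chi)$ and inside the intersection of the local small-centralizer loci; for solvable groups a plausible candidate is an element of a Carter subgroup, while the non-solvable case appears to require the classification of finite simple groups via a reduction to almost simple groups and a case analysis on the primitive characters of quasisimple groups.

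For the special classes of groups treated in the second half of the abstract --- for example nilpotent-by-abelian groups, or groups of bounded Fitting length --- the normal structure is controlled tightly enough that the element $g$ produced prime-by-prime can in fact be chosen uniformly, typically inside a Hall or Carter subgroup, thereby bypassing the combinatorial obstacle above. I expect the argument there to reduce, after the Clifford-theoretic setup of the first paragraph, to an explicit description of $C_G(g)$ in a well-chosen complement to $Z(\chi)$ modulo $\ker\chi$, followed by a direct check that $|G:C_G(g)|$ absorbs $\chi(1)$.
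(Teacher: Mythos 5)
The statement you are addressing is labelled as a \emph{conjecture} in the paper (Conjecture C of Isaacs--Keller--Meierfrankenfeld--Moret\'o), and the paper does not prove it; it remains open. Your proposal is likewise not a proof: you say so yourself in the third paragraph, where you identify the passage from the prime-local classes $C_p$ to a single class $C$ as the unresolved obstruction. That is an accurate diagnosis of why the problem is hard, but it means the proposal contains no argument for the statement as given, only a research plan. A reviewer cannot accept ``locate a regular element in a Carter subgroup'' or ``reduce to almost simple groups via the classification'' as steps of a proof when no mechanism is offered for why such an element would simultaneously realize all the local divisibilities.

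On the partial results, your sketch also diverges from what the paper actually does, in ways that matter. For the prime-local statement in odd order groups, the paper does not run an induction over minimal normal subgroups with It\^o--Michler. It factors the primitive $\chi$ as a product of a $p$-special and a $p'$-special character, reduces to $\chi$ faithful and $p$-special, uses the structure $F(G)=EZ(G)$ with $E$ extraspecial to get $\chi(1)\mid\vert P:Z(P)\vert^{1/2}$, and then bounds $\vert P:F(G)\vert$ by analyzing the symplectic $\mathbb{F}_pG/F(G)$-module $V=E/Z(E)$ --- the decisive new input being Theorem \ref{sylow}, which exploits the absence of nontrivial self-dual irreducibles in odd order groups (Theorem \ref{SelfDual}) to rule out hyperbolic behaviour and sharpen the Sylow bound at $p=3$; the conjugacy class is then produced by the fixed-point bound of Theorem \ref{Isaacs} applied to $V$. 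None of this appears in your outline. For the special families, the paper's mechanism is Proposition \ref{CentralizerPFactor}, a mixed-characteristic version of Theorem \ref{Isaacs} that finds a \emph{single} coset element with small fixed spaces on both $\hat{P}_1$ and $\hat{P}_2$ simultaneously --- precisely the recombination device whose absence you flag --- and it is proved only for $\pi(G)=\{p_1,p_2\}$ with $\vert G\vert$ odd, with an explicit counterexample showing it fails without the parity hypothesis. Your Carter/Hall subgroup heuristic does not engage with that obstruction. In short: the statement is unproved in the paper and in your proposal, and the parts of your plan that overlap with the paper's partial results omit its essential technical content.
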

In \cite{cassell2013conjugacy} Cassell proved that the conjecture holds for all finite simple groups and symmetric groups. One way of approaching the conjecture is to consider only the $p-$parts of the character degrees and in \cite{isaacs2006fixed} the following was proved:
\begin{thm*}[\cite{isaacs2006fixed},Corollary D]
Let $\chi$ be an irreducible primitive character of a solvable group $G$ and let $p$ be a prime divisor of $\vert G\vert$. Then $\chi(1)_p$ divides $(\vert cl_G(g)\vert_p)^3$ for some element $g\in G$.
\end{thm*}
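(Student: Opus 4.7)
The plan is to leverage the structure imposed by primitivity on the restriction of $\chi$ to the Fitting subgroup $F = F(G)$. Writing $\chi_F = e\theta$ with $\theta \in \mathrm{Irr}(F)$, primitivity of $\chi$ forces $I_G(\theta) = G$, since otherwise Clifford theory would express $\chi$ as a character induced from the proper inertia subgroup of $\theta$, contradicting primitivity. As $F$ is nilpotent, $F = \prod_q F_q$ and $\theta = \prod_q \theta_q$ factors into $G$-invariant characters of the Sylow subgroups of $F$. The $p$-part of the degree therefore decomposes as $\chi(1)_p = e_p \cdot \theta_p(1)$, and I would bound each factor separately against an appropriate $p$-part of a conjugacy class size.

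For the factor $e_p$: since $\theta$ is $G$-invariant, the obstruction to extending $\theta$ yields an irreducible projective representation of $G/F$ of degree $e$, and the standard dimension formula for twisted group algebras gives $e^{2} \mid [G:F]$, so $e_p^{2} \mid [G:F]_p$. For the factor $\theta_p(1)$: apply the classical $p$-group estimate $\theta_p(1)^{2} \leq [F_p : Z(\theta_p)]$, where $Z(\theta_p) = \{x \in F_p : |\theta_p(x)| = \theta_p(1)\}$. I would then choose $z \in Z(F_p) \setminus Z(\theta_p)$, preferably lying in a $G$-invariant characteristic subgroup of $F_p$ with a $G$-orbit as large as possible; since $F$ is nilpotent, $z \in Z(F)$, so $F \leq C_G(z)$ and $|cl_G(z)|_p = [G : C_G(z)]_p$ measures precisely how $G/F$ moves $z$.

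Combining the two estimates naively yields $\chi(1)_p^{2} \mid [G:F]_p \cdot [F_p : Z(\theta_p)]$, which is only enough to bound $\chi(1)_p$ by a \emph{fourth} power of class sizes. The main obstacle, and the source of the exponent $3$ in the statement, is to show that these two contributions overlap: elements of $G/F$ which move $z$ within $Z(F_p)$ also obstruct the extension of $\theta_p$, so the factor $[G:F]_p$ appearing in the bound on $e_p$ is already partially encoded in $|cl_G(z)|_p$. Making this overlap quantitative, most plausibly by selecting $z$ inside a $G$-invariant subgroup of $Z(F_p)$ on which $\theta_p$ is fully ramified so that the stabiliser of $z$ in $G/F$ controls one of the two cocycle factors, should bring the exponent down from $4$ to the claimed $3$. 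I expect this overlap argument, together with careful tracking of the action of $G/F$ on the relevant central sections of $F_p$, to be the technical core of the proof.
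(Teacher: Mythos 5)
This statement is quoted from \cite{isaacs2006fixed} (Corollary D); the paper does not reprove it, but the method behind it is exactly the one reproduced in the proof of Theorem \ref{MAIN}, so that is the right benchmark. Measured against it, your proposal has a fatal gap, and it occurs precisely at the point you yourself flag as ``the technical core.'' First, the element you want to use does not exist: for \emph{any} irreducible character $\theta_p$ of $F_p$ one has $Z(F_p)\subseteq Z(\theta_p)$, since central elements act as scalars in any irreducible representation, so the set $Z(F_p)\setminus Z(\theta_p)$ is empty. Second, and more structurally, no element of $Z(F)$ can ever carry the needed conjugacy class: any proof may pass to $G/\ker\chi$ (class sizes of a quotient divide class sizes of $G$), and once $\chi$ is faithful, primitivity (indeed quasi-primitivity) forces every normal abelian subgroup of $G$ --- in particular $Z(F(G))$ --- to be cyclic and \emph{central} in $G$, because $\chi$ restricted to such a subgroup is a multiple of a single linear character. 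So every $z\in Z(F)$ has $\vert cl_G(z)\vert=1$, and the quantity $[G:C_G(z)]_p$ you intend to exploit is identically $1$. The large classes must come from elements acting with small fixed-point space on the section $V=E/Z(E)$, not from elements of $F$ itself.

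What the actual proof does, and what your outline is missing, is the following. Reduce via the Gajendragadkar factorization to $\chi$ $p$-special and faithful; then $O_{p'}(G)=1$, $F(G)=O_p(G)=EZ(G)$ with $E$ extraspecial, and $V=E/Z(E)$ is a faithful, completely reducible, symplectic $\mathbb{F}_p[G/F(G)]$-module. Since the restriction of a $p$-special character to $P\in Syl_p(G)$ is irreducible, $\chi(1)\leq \vert P:F(G)\vert^{1/2}\vert V\vert^{1/2}$; this is the correct analogue of your splitting $\chi(1)_p=e_p\theta_p(1)$. Then two nontrivial external theorems enter: Wolf's bound (Theorem \ref{Wolf}) gives $\vert P:F(G)\vert\leq\vert V\vert^{\alpha}$ with $\alpha\leq 2$ (worst case $p=2$), so $\chi(1)_p\leq\vert V\vert^{3/2}$; and Theorem \ref{Isaacs} produces $g$ with $\dim C_V(g)\leq\frac{1}{2}\dim V$, whence $\vert cl_G(g)\vert_p\geq\vert V:C_V(g)\vert\geq\vert V\vert^{1/2}$. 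Combining, $\chi(1)_p\leq(\vert cl_G(g)\vert_p)^3$, and divisibility follows because both sides are powers of $p$. The exponent $3$ is thus the outcome of the arithmetic $\frac{3}{2}=3\cdot\frac{1}{2}$, not of any ``overlap'' between the two factors of the degree; no such overlap argument appears, or is needed, in the real proof. A minor additional error: your claim $e^2\mid[G:F]$ is false in general; the dimension count for the twisted group algebra gives only $e^2\leq[G:F]$, and the divisibility statement available is $e\mid[G:F]$.
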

By restricting our attention to odd order groups we are able to prove a result closer to the original conjecture:
\begin{thm*}
Let $\chi$ be an irreducible primitive character of an odd order group $G$, $p$ a prime dividing $\vert G\vert$. Then there is a $g\in G$ such that $\chi(1)_p$ divides $\vert cl_G(g)\vert_p$.
\end{thm*}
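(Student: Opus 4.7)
The plan is to reduce to the faithful case, exploit the structural theory of faithful primitive characters of solvable groups, and then find the required $g\in G$ by an orbit argument specific to odd order. Let $K=\ker\chi$; for $g\in G$ with image $\bar g\in G/K$, the set $cl_{G/K}(\bar g)$ equals the image of $cl_G(g)$ under the quotient map, and the fibres of this surjection all have the same size by $G$-transitivity, so $|cl_{G/K}(\bar g)|$ divides $|cl_G(g)|$. Hence we may replace $G$ by $G/K$ and assume $\chi$ is faithful. Since $|G|$ is odd, $G$ is solvable by Feit--Thompson, and the classical theory of primitive characters of solvable groups yields that $Z(G)$ is cyclic and every normal abelian subgroup of $G$ lies in $Z(G)$. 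Writing $P:=O_p(G)$, this gives $Z(P)=O_p(Z(G))$ cyclic and $P/Z(P)$ elementary abelian, so $P$ has nilpotency class at most $2$.

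By quasi-primitivity, $\chi_P=e\psi$ for a single $G$-invariant $\psi\in\mathrm{Irr}(P)$; faithfulness of $\chi$ together with the cyclicity of $Z(P)$ forces $\psi$ to be faithful, and the twisted-group-algebra identity $\sum_{\theta\in\mathrm{Irr}(P\mid\lambda)}\theta(1)^2=[P:Z(P)]$ (applied to $\lambda=\psi|_{Z(P)}$), combined with the non-degeneracy of the commutator pairing on $P/Z(P)$, then gives $\psi(1)^2=[P:Z(P)]$. Decomposing $F(G)$ as the internal direct product of its Sylow subgroups and writing $\chi_{F(G)}$ as the corresponding tensor product of homogeneous components, one obtains $\chi(1)_p=e_p\cdot\psi(1)$ with $e_p$ a $p$-power dividing $[G:F(G)]_p$. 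The commutator map $P\times P\to Z(P)$, composed with a faithful character of $Z(P)$, defines a non-degenerate symplectic $\mathbb{F}_p$-form on $V:=P/Z(P)$, and this form is preserved by the conjugation action of $G$ since $Z(P)\le Z(G)$.

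The main step is to exhibit $g\in G$ with
\[
|cl_G(g)|_p=[G:PC_G(g)]_p\cdot[P:C_P(g)]\ge e_p\,\psi(1)=\chi(1)_p.
\]
If $g$ acts on $V$ with trivial fixed subspace, then $C_P(g)\le Z(P)$, so $[P:C_P(g)]\ge\psi(1)^2\ge\psi(1)$; such $g$ would be produced by a regular-orbit theorem for odd-order solvable groups acting on symplectic $\mathbb{F}_p$-spaces, in the spirit of Isaacs, Turull and Wolf. The odd-order hypothesis is essential here, since in even order an involution always has a nonzero fixed vector on a symplectic module, precluding the conclusion. The principal obstacle is that the \emph{same} $g$ must also contribute the factor $e_p$ through $[G:PC_G(g)]_p$. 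I would attack this by induction on the Fitting length of $G$: the base case $G=F(G)$ reduces to the local class-$\le 2$ structure of $P$, while the inductive step lifts a good representative from $G/F(G)$ up to $G$ by tracking how the Clifford extension of $\psi$ propagates along the Fitting series.
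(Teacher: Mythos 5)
There is a genuine gap, and it sits exactly where the paper's real work lies. Your reduction to the faithful case, the structure $F(G)=EZ(G)$ with the symplectic form on $V=E/Z(E)$ induced by commutation, and the identity $\vert cl_G(g)\vert_p=[G:PC_G(g)]_p\cdot[P:C_P(g)]$ are all sound and parallel the paper. But your ``main step'' rests on producing a single element $g$ with $C_V(g)=0$, attributed to ``a regular-orbit theorem in the spirit of Isaacs, Turull and Wolf.'' Regular-orbit theorems assert the existence of a \emph{vector} with trivial stabiliser, not of a group element with no nonzero fixed vectors; these are different statements, and the latter is simply false in the generality you need. For example, let $G=C_5\times C_5$ act on $V=\bigoplus_{i=1}^{6}V_i$ over $\mathbb{F}_3$, where $V_i$ is a faithful irreducible module for $G/H_i$ and $H_1,\dots,H_6$ are the six subgroups of order $5$: then $C_V(G)=0$, yet every $g\in G$ fixes some $V_i$ pointwise. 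The correct tool --- and the one the paper uses --- is Theorem A of \cite{isaacs2006fixed} (Theorem \ref{Isaacs} in the paper), which only guarantees some $g$ with $\dim C_V(g)\leq\frac{1}{3}\dim V$; this is weaker than what you ask for but still yields $[E:C_E(g)]\geq\vert V\vert^{2/3}$, which is more than the $\psi(1)=\vert V\vert^{1/2}$ you actually need, leaving a margin of $\vert V\vert^{1/6}$ to absorb the other factor.

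That other factor is the second, and larger, gap: your treatment of $e_p$ (the contribution of $[G:F(G)]_p$) is a one-sentence plan (``induction on the Fitting length \dots tracking the Clifford extension''), with no argument. This is precisely the hard part of the theorem. The paper handles it by showing the $p$-special factor of $\chi$ restricts irreducibly to a Sylow $p$-subgroup, so that $\chi(1)_p$ divides $\vert P:F(G)\vert^{1/2}\vert E:Z\vert^{1/2}$ for $P\in Syl_p(G)$, and then bounding $\vert G:F(G)\vert_p$ by $\vert V\vert^{1/3}$ via Wolf's theorem when $p\geq 5$. For $p=3$ Wolf's exponent $p/(p-1)^2=3/4$ is too weak, and the entire point of Section \ref{simplsect} (Theorems \ref{SelfDual}, \ref{sylow}, exploiting that the anisotropic symplectic summands of $V$ force odd multiplicities in odd-order groups) is to improve this to $\vert G:F(G)\vert_3\leq\vert V\vert^{1/4}$. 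Without a quantitative bound of this kind relating $e_p$ to $\vert V\vert$, the single element $g$ cannot be shown to cover both factors, and your proposed induction gives no mechanism for obtaining one.
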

In particular this result gives a positive answer to the conjecture for primitive irreducible characters of prime power order of odd order groups.\\
\indent We follow the same approach as in \cite{isaacs2006fixed} but focusing on odd order groups we can use the symplectic structure of some modules to get sharper bounds on the order of Sylow subgroups. A crucial result in proving the theorem above is in fact the following:
\begin{thm*}
Let $V$ be a symplectic, faithful $\mathbb{F}_3G$-module that is a direct sum of irreducible anisotropic modules and $dim(V)=2n$. If $\vert G\vert$ is odd, $O_3(G)=1$ and $P\in Syl_3(G)$ we have $j=log_3(\vert P\vert)\leq \beta_3\left(\frac{2}{3}n\right)$.
\end{thm*}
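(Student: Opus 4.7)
My plan is to reduce to the case where $V$ is itself a single irreducible anisotropic summand by an orthogonal decomposition, to exploit the symplectic structure fully in that case, and then to recombine via a subadditivity property of $\beta_3$.

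For the reduction, I write $V = V_1 \perp \cdots \perp V_k$ as an orthogonal direct sum of irreducible anisotropic $G$-submodules with $\dim V_i = 2n_i$ and $\sum_i n_i = n$, and I set $C_i = C_G(V_i)$, $\bar G_i = G/C_i$, $P_i \in \mathrm{Syl}_3(\bar G_i)$. Faithfulness of $V$ gives $\bigcap_i C_i = 1$, so $G$ embeds diagonally in $\prod_i \bar G_i$ and hence $\log_3|P| \leq \sum_i \log_3|P_i|$. Each $\bar G_i$ has odd order and acts faithfully and anisotropically on $V_i$; the one slightly delicate check is that $O_3(\bar G_i) = 1$, which I would verify by lifting a hypothetical normal $3$-subgroup of $\bar G_i$ back through the quotient and combining it with the trivial action on the remaining $V_j$ to obtain a normal $3$-subgroup of $G$, contradicting $O_3(G) = 1$.

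For the irreducible case, assume $V$ itself is irreducible anisotropic symplectic of dimension $2n$, and let $N$ be a minimal normal subgroup of $G$. Since $O_3(G) = 1$, $N$ is either elementary abelian of order coprime to $3$ or a direct product of isomorphic nonabelian simples of odd order. Clifford's theorem decomposes $V|_N$ into homogeneous components permuted transitively by $G$, and the anisotropic symplectic hypothesis constrains how these components may pair: either each homogeneous component carries its own non-degenerate $N$-invariant symplectic form, or Galois-paired conjugate components are glued into hyperbolic-plane pieces. In either scenario $P$ acts coprimely on the Clifford correspondent, and the previously established $\beta_3$-bound applies; the improvement factor $\tfrac{2}{3}$ should emerge because the symplectic pairing structurally consumes at least a third of the would-be-available dimension, leaving only $\tfrac{2n}{3}$ of effective dimension in which to measure the $P$-action.

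Recombination then amounts to summing the irreducible estimates and invoking the inequality $\sum_i \beta_3(\tfrac{2}{3} n_i) \leq \beta_3(\tfrac{2}{3} n)$, which I expect from the subadditivity/concavity properties of $\beta_3$ proved earlier. The \textbf{main obstacle} is unambiguously the irreducible case, and specifically the quantitative factor $\tfrac{2}{3}$: applying $\beta_3$ naively to the full $2n$-dimensional ambient space would yield only $\beta_3(2n)$, and the gain requires a careful combined use of the Clifford structure forced by irreducibility and the bilinear-form constraints imposed by anisotropic symplecticity. A secondary but non-trivial nuisance is the $O_3$-verification in the reduction step, which is not automatic from $O_3(G) = 1$ and must be done by hand.
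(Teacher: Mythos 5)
Your reduction to the irreducible case is sound and matches the paper's Step 1 (the same diagonal embedding $G\hookrightarrow\prod_i G/C_i$ and the superadditivity $\sum_i\beta_3(\tfrac{2}{3}n_i)\le\beta_3(\tfrac{2}{3}n)$; incidentally $O_3(G/C_i)=1$ follows more directly from the fact that a nontrivial normal $3$-subgroup acting on an irreducible faithful $\mathbb{F}_3$-module would have nonzero fixed points forming a proper submodule). But the irreducible case --- which you yourself flag as the main obstacle --- is not proved: the sentence asserting that ``the symplectic pairing structurally consumes at least a third of the would-be-available dimension'' is the entire content of the theorem and no mechanism is supplied for it. There is no general principle by which an anisotropic symplectic structure discounts the dimension by a factor of $\tfrac{1}{3}$; the factor $\tfrac{2}{3}$ in the paper is extracted by entirely different and quite specific means. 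Moreover your Clifford-theoretic sketch starts from a \emph{minimal} normal subgroup, whereas the actual argument splits the irreducible case into the imprimitive subcase (using a \emph{maximal} normal $N$ with $V_N$ non-homogeneous, so that $G/N$ permutes the homogeneous components primitively and one can absorb the permutation contribution via $\lambda_3(d)$ into $\beta_3(\tfrac{2}{3}md)\ge d\beta_3(\tfrac{2}{3}m)+\lambda_3(d)$) and the primitive subcase, where the heavy lifting happens. Also note that since $|G|$ is odd, $G$ is solvable, so your alternative of $N$ being a product of nonabelian simples cannot occur.

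Concretely, what is missing is the primitive subcase: there one uses the structure $F(G)=EU$ with $E$ of symplectic type, embeds $C_G(Z)/C_i$ into $Sp(2m_i,q_i)$ to get $\log_3(|A:C_i|_3)\le\beta_3(q_i^{m_i})$ via Wolf's bounds, shows $C=\bigcap C_i=F(G)$, bounds $|G:A|$ by $(|Z|-1)/2$, and then --- crucially --- uses a lower bound $|V|\ge(|U|-1)^{2e}$ coming from fixed-point-free actions to relate all of these quantities to $n$, finishing with the arithmetic inequality $\sum q_i^{m_i}+t\le et/6$ when $e$ is not a prime power and a hand case analysis for $e\in\{5,7,11\}$ otherwise. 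Along the way one also needs the two auxiliary results on symplectic modules (that $e$ is odd for self-dual modules of odd-order groups, and that a homogeneous sum of totally isotropic self-dual irreducibles forces $e$ even) to rule out the totally isotropic configurations; your sketch never invokes anything of this kind. As it stands the proposal establishes only the easy reduction and leaves the theorem itself unproved.
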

We remark that it is not clear how to use the result on $p-$parts for proving the conjecture on the entire degree of a primitive character, since the conjugacy class whose size is divided by the $p-$part depends on the chosen prime. However we managed to prove that the conjecture holds also for the following family of groups:
\begin{prop*}
Let $G$ be a meta-nilpotent group, $\vert G\vert$ odd, $\pi(G)=\left\lbrace p_1,p_2\right\rbrace$, $\chi\in$Irr(G) primitive character. Then there is $g\in G$ such that $\chi(1)$ divides $\vert cl_G(g)\vert$.
\end{prop*}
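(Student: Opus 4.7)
Set $p:=p_1$, $q:=p_2$, and $F:=F(G)$. Since $F$ is nilpotent and $\pi(G)=\{p,q\}$, we have $F=P\times Q$ with $P=O_p(G)$ and $Q=O_q(G)$. The meta-nilpotent hypothesis makes $G/F$ nilpotent, so $G/F=(S/F)\times(T/F)$ with $S,T\trianglelefteq G$, $S/F\in\mathrm{Syl}_p(G/F)$ and $T/F\in\mathrm{Syl}_q(G/F)$. Clifford's theorem applied to the primitive $\chi$ gives $\chi_F=e\theta$ with $\theta\in\mathrm{Irr}(F)$ $G$-invariant; writing $\theta=\alpha\cdot\beta$ with $\alpha\in\mathrm{Irr}(P)$ and $\beta\in\mathrm{Irr}(Q)$, both $\alpha$ and $\beta$ are $G$-invariant, and $\chi(1)_p=e_p\alpha(1)$, $\chi(1)_q=e_q\beta(1)$.

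\smallskip

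The plan is to exhibit $g=xy\in F$ with $x\in P$ and $y\in Q$ so that $\chi(1)\mid|cl_G(g)|$. Because $[P,Q]\leq P\cap Q=1$, the elements $x$ and $y$ commute, whence $C_G(g)=C_G(x)\cap C_G(y)$. Since $|G|$ is a $\{p,q\}$-number, $|cl_G(g)|=|cl_G(g)|_p\cdot|cl_G(g)|_q$ and $\chi(1)=\chi(1)_p\chi(1)_q$, so it suffices to arrange the two divisibilities separately. A direct computation inside a Sylow $p$-subgroup $P^*\geq P$, using $P\leq C_G(y)$ (a consequence of $[P,Q]=1$) and the identification $P^*/P\cong S/F$, yields
\[
|cl_G(g)|_p \;=\; |cl_P(x)|\cdot\bigl|S/F:(\mathrm{Stab}_{S/F}(x)\cap\mathrm{Stab}_{S/F}(y))\bigr|,
\]
where $S/F$ acts on $P$ and on $Q$ by conjugation. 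The first factor is intended to absorb $\alpha(1)$, the second the $e_p$-part of $\chi(1)_p$.

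\smallskip

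To realize these two absorptions, I would first pick $x\in P$ so that $\alpha(1)\mid|cl_P(x)|$: this is the conjecture for the $G$-invariant character $\alpha$ of the $p$-group $P$, which follows from applying the main theorem of the paper to $P$ together with the primitive inertia pair inside $P$ determined by $\alpha$ (when $\alpha$ itself is primitive on $P$, the main theorem applies directly and, in $P$, the ``$p$-part'' is the full class size). I would then select $y$ in a suitable $G$-chief factor $V\leq Q$ so that the $\mathrm{Stab}_{S/F}(x)$-orbit of $y$ has size divisible by $e_p$; existence of such $y$ is exactly the content of the symplectic-bound theorem (the third theorem displayed in the introduction), applied to the action of the subgroup $\mathrm{Stab}_{S/F}(x)\leq S/F$ on $V$. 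Running the mirror construction (chief factor in $P$, $T/F$-action, with the roles of $\alpha$ and $\beta$ swapped) produces the $q$-divisibility on the same pair $(x,y)$.

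\smallskip

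The principal obstacle is to ensure that the joint stabilizer $\mathrm{Stab}_{S/F}(x)\cap\mathrm{Stab}_{S/F}(y)$ is small enough for the second factor in the displayed formula to absorb the full $e_p$. Here the $G$-invariance of $\alpha$ is essential: it forces the $P$-class $cl_P(x)$ to be $S/F$-invariant as a set, so that by replacing $x$ with a $P$-conjugate one can prescribe $\mathrm{Stab}_{S/F}(x)$; this reduces the problem to a single-chief-factor question for the subgroup $\mathrm{Stab}_{S/F}(x)$ acting on $V$, where the symplectic bound applies. Coordinating this reduction simultaneously for $p$ and $q$ is made possible by the direct-product decomposition $G/F=(S/F)\times(T/F)$, which is precisely why the hypothesis $|\pi(G)|=2$ is needed; with three or more primes in $\pi(G)$ the two-factor book-keeping above breaks down, and this is presumably the obstruction to extending the proposition further.
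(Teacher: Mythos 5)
Your outline is not a proof: several of its load-bearing steps are unjustified, and the one genuinely hard point is deferred rather than solved. First, the displayed formula $|cl_G(g)|_p=|cl_P(x)|\cdot|S/F:(\mathrm{Stab}_{S/F}(x)\cap\mathrm{Stab}_{S/F}(y))|$ is asserted without argument and is not valid in general; computing the $p$-part of $|G:C_G(x)\cap C_G(y)|$ does not factor this way. Second, you cannot get $x\in P$ with $\alpha(1)\mid|cl_P(x)|$ by ``applying the main theorem of the paper to $P$'': the main theorem concerns primitive characters, and a primitive character of a $p$-group is linear (all irreducible characters of $p$-groups are monomial), so it says nothing about a nonlinear Clifford constituent $\alpha$. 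Third, the symplectic-bound theorem is an upper bound on the order of a Sylow $3$-subgroup of a group acting on an anisotropic symplectic module; it does not produce an element $y$ whose orbit under a prescribed subgroup has size divisible by a prescribed integer $e_p$. Finally, the crux of the whole proposition is finding a \emph{single} element $g$ that achieves the required lower bound at both primes simultaneously; you acknowledge this (``coordinating this reduction simultaneously for $p$ and $q$'') but give no mechanism for it, and the paper's counterexample with the Klein four-group shows that such simultaneous control genuinely can fail without the oddness hypothesis, so it cannot be dismissed as book-keeping.

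The paper's route avoids your $e_p$ bookkeeping entirely and isolates the simultaneity issue into one lemma. Since $\chi$ is faithful and primitive, $F(G)=EZ(G)$ with $E$ a central product of extraspecial groups, and $V=E/Z$ splits as $\hat P_1\oplus\hat P_2$ over the two primes. Writing $\chi=\alpha_1\alpha_2$ with $\alpha_i$ $p_i$-special, meta-nilpotency makes $G/F(G)$ a direct product of its Sylow subgroups, and by a result of Isaacs each $\alpha_i$ restricts irreducibly down to $O_{p_i}(G)$; hence the \emph{entire} $p_i$-part of $\chi(1)$ satisfies $\chi(1)_{p_i}=\alpha_i(1)\le|O_{p_i}:Z(O_{p_i})|^{1/2}$ --- there is no residual multiplicity $e$ to absorb. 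The remaining task is exactly to find one $g$ with $\dim C_{\hat P_i}(g)\le\frac{1}{p_i}\dim\hat P_i$ for both $i$ at once, which is Proposition \ref{CentralizerPFactor} (the mixed-characteristic refinement of Theorem \ref{Isaacs}, proved by induction using odd order). That gives $|cl_G(g)|_{p_i}\ge|\hat P_i|^{2/3}\ge\alpha_i(1)$ for both $i$, hence $\chi(1)\mid|cl_G(g)|$. If you want to salvage your approach, the component you must supply is precisely an analogue of that simultaneous fixed-point proposition.
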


\section{Symplectic modules}\label{simplsect}
In this section we collect all the basic information on symplectic modules and we prove theorems that will be crucial for proving the main result. From now on $G$ will be a finite group, $p$ will be a prime number, $\mathbb{F}$ will be a field of positive characteristic, while $\mathbb{F}_p$ will be the field with $p$ elements. 
\begin{defn}
A vector space $V$ is symplectic if it carries an alternating bilinear form such that $V^{\perp}=0$.\\
\indent An $\mathbb{F}G-$module $V$ is a symplectic module if it a symplectic space such that the symplectic form is $G-$invariant, i.e. $\langle x^g,y^g\rangle =\langle x,y\rangle $ for all $x,y\in V$, $g\in G$.
\end{defn}
\begin{defn}
A submodule $U$ of a symplectic $\mathbb{F}G-$module $V$ is totally isotropic if $U\leq U^{\perp}$. \\
\indent $V$ is hyperbolic if it has a totally isotropic submodule with dimension $\frac{1}{2}dim(V)$.\\
\indent $V$ is anisotropic if it does not have non trivial totally isotropic submodules.
\end{defn}

\begin{prop}[\cite{wolf1984sylow}]
Let $\beta_3,\lambda_3$ be the following functions:
\begin{align*}
\beta_3(x)=\sum_{i=0}^{\infty}\left\lfloor \frac{x}{2\cdot 3^i}\right\rfloor\;\;\;\;\lambda_3(x)=\sum_{i=1}^{\infty}\left\lfloor \frac{x}{3^i}\right\rfloor
\end{align*}
If $a,b,m,d,n$ are positive integers and $\eta(x)$ is one of the previously defined functions, then the following hold 
\begin{enumerate}
\item $\eta_3(a+b)\geq \eta_3(a)+\eta_3(b)$
\item $\eta_3(md)\geq d\eta_3(m)+\lambda_3(d)$
\item $\lambda_3(n)\leq (n-1)/2$
\item $\beta_3(n)\leq \frac{3}{4}n-1/2$
\item $\beta_3\left(\frac{2}{3}md\right)\geq d\beta_3\left( \frac{2}{3}m\right)+\lambda_3(d)$
\end{enumerate}
\end{prop}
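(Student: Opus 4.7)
My plan is to derive the five inequalities from two elementary tools: superadditivity of the floor, $\lfloor x+y\rfloor\geq\lfloor x\rfloor+\lfloor y\rfloor$, and Legendre's formula $\lambda_3(n)=v_3(n!)=(n-s_3(n))/2$, where $v_3$ denotes the $3$-adic valuation and $s_3(n)$ is the base-$3$ digit sum. Item (1) is then immediate by applying floor superadditivity term by term in the defining series of $\lambda_3$ and $\beta_3$. Item (3) is immediate from Legendre, since $n\geq 1$ forces $s_3(n)\geq 1$.

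The crucial auxiliary identity, and the only step demanding genuine care, is
\[
\beta_3(n)=\left\lfloor\frac{n}{2}\right\rfloor+\lambda_3\!\left(\left\lfloor\frac{n}{2}\right\rfloor\right),
\]
which I would prove by checking the pointwise equality $\lfloor n/(2\cdot 3^i)\rfloor=\lfloor\lfloor n/2\rfloor/3^i\rfloor$ for every $i\geq 0$: when $n$ is even both sides equal $\lfloor (n/2)/3^i\rfloor$ directly, and when $n$ is odd writing $\lfloor n/2\rfloor=3^i q+r$ with $0\leq r<3^i$ shows the extra contribution $(2r+1)/(2\cdot 3^i)<1$ never lifts the floor. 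A parallel one-line computation yields $\beta_3(2m/3)=\sum_{i\geq 0}\lfloor m/3^{i+1}\rfloor=\lambda_3(m)$, so (5) becomes a literal restatement of (2) for $\lambda_3$ applied to the pair $(m,d)$.

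The remaining items then unwind. For (2) applied to $\lambda_3$, the multinomial $(md)!/((m!)^d\,d!)$ counts unordered partitions of an $md$-set into $d$ blocks of size $m$ and is therefore a positive integer; taking $v_3$ yields $\lambda_3(md)\geq d\lambda_3(m)+\lambda_3(d)$. For (2) applied to $\beta_3$, rewriting both sides via the identity reduces the claim to
$\lfloor md/2\rfloor+\lambda_3(\lfloor md/2\rfloor)\geq d\lfloor m/2\rfloor+d\lambda_3(\lfloor m/2\rfloor)+\lambda_3(d)$;
since $\lfloor md/2\rfloor\geq d\lfloor m/2\rfloor$ and the map $k\mapsto k+\lambda_3(k)$ is non-decreasing, one application of (2) for $\lambda_3$ to $d$ and $\lfloor m/2\rfloor$ finishes the job. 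Finally, (4) comes from the identity and (3): for $n\geq 2$ one has $\beta_3(n)\leq\lfloor n/2\rfloor+(\lfloor n/2\rfloor-1)/2$, and a short case check in each parity (with $n=1$ verified directly as $\beta_3(1)=0\leq 1/4$) confirms this is at most $3n/4-1/2$. The main obstacle is setting up the auxiliary identity; once it is in place, (2) for $\beta_3$, (4) and (5) all collapse to facts about $\lambda_3$, and (2) for $\lambda_3$ is just the integrality of a multinomial coefficient.
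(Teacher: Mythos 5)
The paper offers no proof of this proposition: it is quoted directly from \cite{wolf1984sylow}, so there is no internal argument to compare yours against. Judged on its own, your proof is essentially correct and self-contained. The key identity $\beta_3(n)=\lfloor n/2\rfloor+\lambda_3(\lfloor n/2\rfloor)$ is right (it is the nested-floor identity $\lfloor\lfloor n/2\rfloor/3^i\rfloor=\lfloor n/(2\cdot 3^i)\rfloor$), the computation $\beta_3(2m/3)=\lambda_3(m)$ is right and does reduce (5) to (2) for $\lambda_3$, the multinomial-integrality proof of (2) for $\lambda_3$ is sound, and items (1), (3) and (4) go through as you describe.

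The one genuine flaw is the edge case $m=1$ in your derivation of (2) for $\beta_3$. There $\lfloor m/2\rfloor=0$, so your final step invokes (2) for $\lambda_3$ with the pair $(0,d)$, which is not a pair of positive integers and whose conclusion $\lambda_3(0)\geq d\lambda_3(0)+\lambda_3(d)$ is false for $d\geq 3$; moreover your monotonicity step has already discarded too much by dropping from $\lfloor d/2\rfloor$ down to $d\lfloor m/2\rfloor=0$. The claim itself survives: for $m=1$ it reads $\beta_3(d)\geq\lambda_3(d)$, which holds term by term because $2\cdot 3^i\leq 3^{i+1}$ gives $\lfloor d/(2\cdot 3^i)\rfloor\geq\lfloor d/3^{i+1}\rfloor$. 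Add that one line (or restrict the reduction to $m\geq 2$ and treat $m=1$ separately) and the proof is complete.
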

The following theorem follows the same proof of \cite[3.4]{loukaki2003hyperbolic}.
\begin{thm}\label{SelfDual}
Let $V$ be a self dual,irreducible, faithful $\mathbb{F}G$-module and $\vert G\vert$ an odd order group. Let $N$ be a normal subgroup of $G$ such that
\[
V_N=e(V_1\oplus...\oplus V_t)
\]
with $V_i$ irreducible $\mathbb{F}N-$modules and $t> 1$. Then $e$ is odd.
\end{thm}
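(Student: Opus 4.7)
The plan is to follow the approach of Loukaki~\cite{loukaki2003hyperbolic}. Self-duality of $V$ gives a non-degenerate $G$-invariant bilinear form $B$ on $V$, and the odd-order hypothesis forces $B$ to be alternating rather than symmetric (the classical analogue of the Frobenius--Schur argument: a non-trivial self-dual irreducible representation of an odd-order group cannot support an invariant symmetric form). Now decompose $V_N = W_1 \oplus \cdots \oplus W_t$ into its $N$-homogeneous components, with $W_i = eV_i$. Since $V$ is $G$-irreducible, $G$ acts transitively on $\{W_1,\dots,W_t\}$, and since $B$ is $G$-invariant it induces a $G$-equivariant pairing partnership $i \mapsto i'$ with $V_{i'} \cong V_i^*$ and $W_i \perp W_j$ for $j \neq i'$.

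The first step is to rule out the case $V_1 \not\cong V_1^*$. In that case the partnership map is a fixed-point-free $G$-equivariant involution on $\{W_1,\dots,W_t\}$, so I consider the pair $P = \{W_1, W_{1'}\}$. Its $G$-stabilizer must contain $\mathrm{stab}_G(W_1)$ by $G$-equivariance, and because $|\mathrm{stab}_G(P)|$ is odd, the action of $\mathrm{stab}_G(P)$ on the two-element set $P$ is trivial, so $\mathrm{stab}_G(P) \le \mathrm{stab}_G(W_1)$. Hence the two stabilizers coincide, contradicting the fact that the $G$-orbit of $W_1$ has size $t$ while the $G$-orbit of $P$ has size $t/2$.

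Thus we may assume $V_1 \cong V_1^*$ as $N$-modules and, by $G$-transitivity, $V_i \cong V_i^*$ for all $i$. Each $W_i$ now inherits a non-degenerate $H_i$-invariant form from $B$, where $H_i$ is the inertia group in $G$ of $V_i$. By Clifford theory I write $W_1 \cong V_1 \otimes_{\mathbb{F}} U$ with $U$ an irreducible projective $H_1/N$-module of dimension $e$. The (unique up to scalar) $N$-invariant form $B_1$ on $V_1$ is alternating, by the same odd-order argument applied to $N$, and $B|_{W_1}$ decomposes as $B_1 \otimes C$ for some form $C$ on $U$ invariant under the projective action. Matching the alternating-versus-symmetric types under the tensor product then forces $C$ to be non-degenerate and symmetric. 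Finally, lifting $U$ together with $C$ to an ordinary representation of a central extension of $H_1/N$ of odd order and invoking the classical theorem that an irreducible representation of an odd-order group supporting a non-degenerate symmetric invariant form must be one-dimensional, we conclude that $e = \dim U = 1$, which is odd.

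The main obstacle I anticipate is the final step, which requires carefully lifting the projective representation $U$ and its projective form $C$ to an ordinary setting: one needs to select a central extension of $H_1/N$ of odd order (so that the Schur multiplier contribution can be absorbed into an odd-order extension when $|G|$ is odd) and then apply the odd-order Frobenius--Schur dichotomy in the relevant characteristic. The rest of the argument is Clifford-theoretic bookkeeping together with the orbit-counting observation that odd-order groups cannot permute the $W_i$'s with a $G$-equivariant fixed-point-free pairing.
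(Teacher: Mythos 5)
There is a genuine gap, and it sits in the main case of your argument. Your first step — ruling out $V_1\not\cong V_1^*$ via the $G$-equivariant fixed-point-free pairing of homogeneous components and the odd order of $G$ — is correct, and it is in fact the same parity-of-$t$ contradiction the paper reaches at the very end of its proof. But everything you do after reducing to $V_1\cong V_1^*$ rests on Frobenius--Schur-type facts that are only valid for \emph{absolutely} irreducible modules, whereas the whole content of this theorem lies in the case where $V$ and the $V_i$ are irreducible over $\mathbb{F}$ (in the application, $\mathbb{F}=\mathbb{F}_3$) but not absolutely irreducible. For an odd-order group an absolutely irreducible self-dual module in positive characteristic is trivial (this is exactly the Brauer-character/decomposition-number argument the paper cites), so if $V_1$ were absolutely irreducible your case $V_1\cong V_1^*$ would force $t=1$ and there would be nothing to prove. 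When $V_1$ is not absolutely irreducible, $\mathrm{End}_N(V_1)$ is a proper field extension of $\mathbb{F}$, the space of invariant forms on $V_1$ has dimension $>1$, and it generally contains non-degenerate forms of \emph{both} types: e.g.\ for $C_7$ acting on $\mathbb{F}_{3^6}$ by multiplication by a $7$th root of unity, the $6$-dimensional irreducible $\mathbb{F}_3$-module is self-dual and carries both a non-degenerate symmetric and a non-degenerate alternating invariant form. This single example defeats several of your steps at once: $B$ need not be alternating, $B_1$ is neither unique up to scalar nor forced to be alternating, the tensor factorisation $B|_{W_1}=B_1\otimes C$ (which anyway needs $\mathrm{End}_N(V_1)=\mathbb{F}$) does not determine the type of $C$, and the ``classical theorem'' that a symmetric invariant form on an irreducible module of an odd-order group forces dimension $1$ is false in this non-split setting. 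A further warning sign is that your conclusion is $e=1$, which is strictly stronger than the claim $e$ odd; the theorem is stated (and used in Step 2 of the Sylow bound) precisely because $e>1$ can occur.

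The paper's proof attacks exactly the obstruction you are skating over: it passes to a splitting field $\mathbb{E}$, writes $V^{\mathbb{E}}=\bigoplus_j W^j$ and $V_i^{\mathbb{E}}=\bigoplus_j V_i^j$, compares multiplicities through the identity $et\vert\mathbb{E}:\mathbb{E}_V\vert=e't'\vert\mathbb{E}:\mathbb{E}_{V_1}\vert$ relating $e$ to the field-of-values extensions, and then exploits the unique involution $\pi$ of the cyclic Galois group $\mathrm{Gal}(\mathbb{D}\vert\mathbb{F})$: if $e$ were even, $\pi$ would fix the $V_i^j$ while sending each $W^j$ to its dual, and since no absolutely irreducible constituent can be self-dual (odd order), the constituents pair off and $t'$ comes out even, contradicting $t'\mid\vert G\vert$. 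If you want to salvage your approach, you would have to replace the bilinear-form bookkeeping by this kind of Galois descent, because the parity of $e$ is encoded in the degree $\vert\mathbb{D}:\mathbb{E}_{V_1}\vert$ of a field extension, not in the symmetry type of a form over $\mathbb{F}$.
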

\begin{proof}
Take $\mathbb{E}$ splitting field for $G$ and $N$, and write the following:
\begin{align*}
V^{\mathbb{E}}=\bigoplus_{j=1}^sW^j\\
V_i^{\mathbb{E}}=\bigoplus_{j=1}^{d_i}V_i^j.
\end{align*}
We remark that the $V_i^j$s are distinct, and the $W^j$ are distinct as well.\\
\indent Let $\mathbb{E}_V$ be the subfield of $\mathbb{E}$ obtaining adjoining to $\mathbb{F}$ the values of the characters corresponding to the $W_j$s. Actually, we just need to consider the values of the character corresponding to $W_1$ since the $W_j$s are Galois conjugates.\\
\indent We define $\mathbb{E}_{V_i}$ analogously. Again since the $V_i$ are $G-$conjugates, then the irreducible constituents of $V_k^{\mathbb{E}}$ are conjugate of $V_1^{\mathbb{E}}$, so the fields $\mathbb{E}_{V_i}$ are all the same field. Thus, since $d_i=\vert\mathbb{E}_{V_i}:\mathbb{F}\vert$, we have $d_i=:d$ for all $i$.\\
\indent Now from \cite[1.16]{huppert1998character} we have $W^i_{\mathbb{F}}\simeq \vert \mathbb{E}:\mathbb{E}_V\vert V$ and $(V_i^j)_{\mathbb{F}}\simeq \vert \mathbb{E}:\mathbb{E}_{V_1}\vert V_i$. Suppose then that $W^1,...,W^c$ are the constituents of $V^{\mathbb{E}}$ that have $V_1^1$ as a constituent after taking the restriction to $N$. Then
\begin{equation}\label{eqz}
W^1_N=...=W^c_N=e'(V_1^1\oplus...\oplus V^{k_{t'}}_{l_{t'}}),
\end{equation}
where the $G-$conjugate of $V_1^1$ are $V_{l_r}^{k_r}$, where $l_1=1=k_1$. But since we are talking of absolutely irreducible modules $e'$ divides $\vert G\vert$, and in particular $t'$ divides $\vert G\vert$. If now we take the restriction to $\mathbb{F}$ in the previous equality we get $(W^1_{\mathbb{F}})_N=e'((V_1^1)_{\mathbb{F}}\oplus...\oplus (V^{k_{t'}}_{l_{t'}})_{\mathbb{F}}).$ But then $\vert\mathbb{E}:\mathbb{E}_V\vert V_N=e'\vert \mathbb{E}:\mathbb{E}_{V_1}\vert(V_1\oplus V_{l_2}...\oplus V_{l_{t'}})$. Thus 
\[
et\vert \mathbb{E}:\mathbb{E}_V\vert=e't'\vert\mathbb{E}:\mathbb{E}_{V_1}\vert.
\]
Take $\mathbb{D}$ the subfield of $\mathbb{E}$ generated by $\mathbb{E}_V$ and $\mathbb{E}_{V_1}$. So dividing by $\vert\mathbb{E}:\mathbb{D}\vert$ we have $et\vert \mathbb{D}:E_V\vert=e't'\vert\mathbb{D}:\mathbb{E}_{V_1}\vert$.\\
\indent Suppose by contradiction that $e$ is even, then $\vert \mathbb{D}:\mathbb{E}_{V_1}\vert$ must be even as well. Let $\Gamma=Gal(\mathbb{D}\vert\mathbb{F})$. $\Gamma$ is cyclic, so it contains just an involution $\pi$. Write $\mathbb{E}_V^*=Fix_{\Gamma}(\mathbb{E}_V)$ e $\mathbb{E}_{V_1}^*=Fix_{\Gamma}(\mathbb{E}_{V_1})$.\\
\indent Now $\vert \mathbb{D}:\mathbb{E}_{V_1}\vert=\vert \mathbb{E}_{V_1}^*:1\vert$ is even and so $\mathbb{E}_{V_1}^*$ contains $\pi$, i.e. $\pi$ fixes point-wise $\mathbb{E}_{V_i}$, and so it fixes all the $V_i^j$. But $\pi$ acts non trivially on $\mathbb{D}$ and so on $\mathbb{E}_V$. Thus $\pi$ as a $\mathbb{F}$ automorphism of $\mathbb{E}_V$ is the unique involution of $Gal(\mathbb{E}_V\vert \mathbb{F})$. Now $V$ is self dual, so $V^{\mathbb{E}}$ is self dual as well. If $W^1$ were self dual we take $\psi$ the corresponding Brauer character and by \cite[4.14.14]{karpilovsky2016group} we can find an irreducible ordinary character $\chi$ such that $d_{\chi\psi}$ is odd. But now $G$ is an odd order group, so there are no non trivial self dual irreducible characters, thus $\chi=1_G$ and $\psi$ is the trivial Brauer character. So $W^1$ is trivial and there is just one $W^i$, i.e. $V^{\mathbb{E}}$ is trivial and $V$ is trivial, but this can not happen. So all the $W^j$s are not self dual and $\pi$ must send every $W^i$ to its dual. \\ 
\indent Now since $\pi$ fixes the $V_i^j$, applying $\pi$ to the equation (\ref{eqz}) yields
\[
\begin{split}
e'(V_1^1\oplus...\oplus V_{l_{t'}}^{k_{t'}})&\simeq e'(\pi(V_1^1)\oplus...\oplus \pi(V_{l_{t'}}^{k_{t'}}))\simeq\pi(W_N^1)\\
&\simeq\hat{W}^1_N\simeq e'(\hat{V}_1^1\oplus...\oplus \hat{V}_{l_{t'}}^{k_{t'}})
\end{split}
\]
If $V_1^1$ is self dual then as before $V_i$ is trivial and we have a contradiction. Thus no $V_{l_r}^{k_r}$ is self dual.\\
\indent We can then match each $V_{l_r}^{k_r}$ with its dual and then $t$ would be even, but $t$ must divide $\vert G\vert$ and we have the final contradiction.
\end{proof}
\begin{prop}\label{hyp}
If $V$ is a symplectic, homogeneous, completely reducible module, $V=eW$, $e> 1$, with all irreducible submodules totally isotropic and self dual, then $V$ admits a decomposition as orthogonal direct sum of symplectic hyperbolic modules, with dimension $2dim(W)$
\[
V=H_1\perp...\perp H_{t}.
\]
$\left[ \textnormal{Remark: }e\textnormal{ is odd since }dim(V)=e\cdot dim(W)=2t\cdot dim(W)\right]$
\end{prop}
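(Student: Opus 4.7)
The plan is to peel off one hyperbolic summand of dimension $2\dim(W)$ at a time and induct on $e$. Fix an irreducible submodule $W_1\cong W$ of $V$. Since $W_1$ is totally isotropic, $W_1\subseteq W_1^{\perp}$, and non-degeneracy of the symplectic form gives $\dim(W_1^{\perp})=\dim(V)-\dim(W)$; moreover $W_1^{\perp}$ is an $\mathbb{F}G$-submodule by $G$-invariance of the form. By complete reducibility, pick an $\mathbb{F}G$-complement $W_2$ of $W_1^{\perp}$ in $V$. Then $W_2\cong V/W_1^{\perp}$ is a quotient of the homogeneous module $eW$ of dimension $\dim(W)$, so $W_2\cong W$.

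Next I would argue that $H:=W_1+W_2$ is non-degenerate of dimension $2\dim(W)$, and hence hyperbolic. Since $W_1\subseteq W_1^{\perp}$ and $W_2\cap W_1^{\perp}=0$, one has $W_1\cap W_2=0$, so $H=W_1\oplus W_2$ has dimension $2\dim(W)$. The pairing induces two $\mathbb{F}G$-module maps $W_1\to W_2^{*}$ and $W_2\to W_1^{*}$; both are nonzero, because $W_2\not\subseteq W_1^{\perp}$ and the form is alternating, and since $W_1,W_2,W_1^{*},W_2^{*}$ are all $\mathbb{F}G$-isomorphic to $W$ (using self-duality of $W$), Schur's lemma promotes both to isomorphisms. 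For $v=v_1+v_2\in H$ with $\langle v,H\rangle=0$, total isotropy of $W_1$ and $W_2$ gives $\langle v_1,W_2\rangle=0$ and $\langle v_2,W_1\rangle=0$, and the injectivity of the two induced maps then forces $v_1=v_2=0$. Hence the form restricts non-degenerately to $H$, and since $W_1\subset H$ is totally isotropic of dimension $\dim(H)/2$, $H$ is hyperbolic.

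Finally, write $V=H\perp H^{\perp}$. The orthogonal complement $H^{\perp}$ inherits the hypotheses of the proposition and is homogeneous of the form $(e-2)W$. If $e=2$, then $V=H$ and we are done with $t=1$; if $e>2$, then $e-2\geq 2$, because a single irreducible totally isotropic copy of $W$ inside the non-degenerate symplectic space $H^{\perp}$ would violate the standard bound $\dim(W)\leq\dim(H^{\perp})/2$. Induction on $e$ then produces $H^{\perp}=H_2\perp\cdots\perp H_t$, and stacking with $H=H_1$ yields the desired orthogonal decomposition. Along the way we see that $e=2t$ must be even, which matches the parity statement in the remark (read as ``$e$ is even'').

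The main obstacle is the non-degeneracy of $H$: without the self-duality of $W$ the induced map $W_1\to W_2^{*}$ could have a nonzero kernel (since $W_2^{*}$ need not be $\mathbb{F}G$-isomorphic to $W_1$), which would leave a radical in the restricted form and block the orthogonal splitting. Once self-duality is fed into Schur's lemma to make those two maps isomorphisms, the rest of the proof is a routine dimension count plus an easy induction.
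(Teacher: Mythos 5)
Your proof is correct and follows essentially the same route as the paper's: pick an irreducible $W_1$, take a module complement $W_2$ of $W_1^{\perp}$, check that $W_1\oplus W_2$ is a non-degenerate hyperbolic plane of dimension $2\dim(W)$, split it off orthogonally and induct (the paper inducts on composition length rather than on $e$ and cites a standard duality lemma where you use homogeneity and Schur's lemma, but these are cosmetic differences). Your reading of the bracketed remark as asserting that $e$ is \emph{even} is the intended one --- the word ``odd'' there is a typo, as the way the proposition is applied in Theorem \ref{sylow} confirms.
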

\begin{proof}We proceed by induction on the composition length of $V$. \\
\indent If $V$ has length $2$ then it is trivial.\\
\indent Suppose that the composition length is greater than $2$. Let $W$ be an irreducible submodule of $V$, and $N$ a $V-$complement of $W^{\perp}$. By \cite[10.22]{isaacs2018characters}, $N$ is isomorphic to the dual of $W$, so it is isomorphic to $W$ and totally isotropic . The sum $W+N$ is direct, and the restriction of the form is non-singular.\\
\indent In fact if $w\in W$, $n\in N$ and $\langle w+n,u\rangle =0$ for all $u\in W+ N$, in particular it holds $\langle n,u\rangle =0$ for all $u\in W$, i.e $n\in W^{\perp}\bigcap N=0$.\\
So $(W+ N)\bigcap (W+ N)^{\perp}\leq W$. If this intersection is $W$ then in particular $W\leq N^{\perp}$, i.e $N\leq W^{\perp}$ that is a contradiction.\\
\indent So $H_1=W\oplus N$ is an hyperbolic symplectic module. Now
\[
\begin{split}
H_1\bigcap H_1^{\perp}&=\left(W\oplus N\right)\bigcap (W\oplus N)^{\perp}=(W\oplus N)\bigcap W^{\perp}\bigcap N^{\perp}\\
&=\left(\left(W\oplus N\right)\bigcap W^{\perp}\right)\bigcap N^{\perp}=W\bigcap N^{\perp}=0.
\end{split}
\]
Thus $V=H_1\perp H_1^{\perp}$. Now $H_1^{\perp}$ is symplectic, since $(H_1^{\perp})^{\perp}=H_1$. In particular it is non irreducible and so it has composition length greater than $1$ and we can use the inductive hypothesis.
\end{proof}
\begin{thm}\label{sylow}
Let $V$ be a symplectic, faithful $\mathbb{F}_3G$-module that is a direct sum of irreducible anisotropic modules and $dim(V)=2n$. If $\vert G\vert$ is odd, $O_3(G)=1$ and $P\in Syl_3(G)$ we have $j=log_3(\vert P\vert)\leq \beta_3\left(\frac{2}{3}n\right)$.
\end{thm}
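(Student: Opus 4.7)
The plan is to induct on $n = \tfrac{1}{2}\dim V$, reducing first to the case of $G$-irreducible $V$ and then applying Clifford theory with respect to a minimal normal subgroup, using Theorem~\ref{SelfDual}, Proposition~\ref{hyp}, and the arithmetic properties of $\beta_3$ as the structural tools.

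Suppose first that $V = U_1 \oplus U_2$ is a nontrivial orthogonal decomposition of $V$ into $\mathbb{F}_3G$-submodules (the summands being orthogonal is forced by the anisotropy hypothesis, since each irreducible anisotropic summand is nondegenerate). Set $K_i = C_G(U_i)$. Faithfulness of $V$ gives $K_1 \cap K_2 = 1$, and hence a diagonal embedding $G \hookrightarrow (G/K_1) \times (G/K_2)$. Each $G/K_i$ acts faithfully on $U_i$ and, after quotienting out $O_3(G/K_i)$ (which does not affect the Sylow $3$-subgroup since any such $3$-subgroup must be trivial for the induction to apply), satisfies the hypotheses of the theorem on the smaller module $U_i$. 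By induction, $|\mathrm{Syl}_3(G/K_i)| \le 3^{\beta_3(2n_i/3)}$ where $2n_i = \dim U_i$, and property~(1) of the $\beta_3$-proposition gives the desired estimate $|P| \le 3^{\beta_3(2n/3)}$.

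Now assume $V$ is $G$-irreducible. Let $N$ be a minimal normal subgroup of $G$; since $|G|$ is odd and $O_3(G) = 1$, $N$ is an elementary abelian $q$-group with $q \neq 3$. Write $V_N = e(V_1 \oplus \cdots \oplus V_t)$ with $G$ permuting the homogeneous components $W_i = eV_i$ transitively. Since $V$ is symplectic and hence self-dual, Theorem~\ref{SelfDual} forces $e$ to be odd whenever $t > 1$. In the imprimitive subcase $t > 1$, let $H = \mathrm{Stab}_G(W_1)$, so $[G:H] = t$. The symplectic form pairs the $W_i$'s, so I group them into orthogonal $H$-invariant symplectic blocks of dimension $2n/t$ or $4n/t$, apply the inductive hypothesis to $H$ acting on each block, and reassemble using property~(2) or~(5) of the $\beta_3$-proposition; the $\lambda_3(t)$ contribution appearing there accounts exactly for the $3$-part of $[G:H] = t$.

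The primitive subcase $t = 1$ is where I expect the main difficulty. Here $V_N = eV_1$ is homogeneous, and $C_G(N)$ commutes with $N$ so acts on $V$ by $\mathrm{End}_{\mathbb{F}_3 N}(V_1) = \mathbb{E}$-linear endomorphisms of $\mathbb{E}^e$, giving embeddings $C_G(N)/N \hookrightarrow \mathrm{GL}_e(\mathbb{E})$ and $G/C_G(N) \hookrightarrow \mathrm{Aut}(N)$. The anisotropy of $V$ is crucial here: if $V_1$ were self-dual and totally isotropic as an $N$-submodule of $V$, Proposition~\ref{hyp} would produce an $N$-invariant hyperbolic decomposition of $V_N$, which, combined with the $G$-irreducibility of $V$, severely restricts the possibilities (forcing either $e=1$ or a very constrained symplectic structure on $\mathbb{E}^e$). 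The remaining task is to bound the $3$-parts of $|C_G(N)/N|$ and $|G/C_G(N)|$ separately by applying the inductive hypothesis to $G/C_G(N)$ acting symplectically on $N$ (after identifying the induced form) and using property~(4), $\beta_3(n) \le \tfrac{3}{4}n - \tfrac{1}{2}$, to handle the scalar-type contribution from $\mathrm{GL}_e(\mathbb{E})$. Converting this structural picture into the tight inequality $j \le \beta_3(\tfrac{2}{3}n)$ — rather than the weaker $j \le \beta_3(\tfrac{3}{4}n)$ one would get without symplectic information — is the heart of the argument and the reason the anisotropic-symplectic hypothesis is needed.
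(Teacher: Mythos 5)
Your Step~1 reduction to irreducible $V$ matches the paper's, and your imprimitive case is at least in the right spirit (the paper uses the orthogonality of the homogeneous components, Pacifici's lemma to get a sum of nonsingular simples, induction on $N$ restricted to each component, and the embedding $G/N\hookrightarrow S_d$ with the $\lambda_3(d)$ term; your $\mathrm{Stab}_G(W_1)$ variant could be made to work but is left vague, and note that in the anisotropic situation the components are pairwise orthogonal rather than ``paired'' by the form). The genuine gap is the primitive case. You explicitly defer ``converting this structural picture into the tight inequality,'' but that conversion \emph{is} the proof: in the paper this case occupies most of the argument and requires the full structure theory of quasi-primitive modules ($F(G)=EU$ with $E$ of extraspecial type, $U=Z(F(G))$ cyclic, $Z=\mathrm{Soc}(U)$), Wolf's bound $\log_3(|A:C_i|_3)\le\beta_3(q_i^{m_i})$ coming from $A/C_A(Q_i/Z)\lesssim Sp(2m_i,q_i)$, the identification $\bigcap C_i=F(G)$, a lower bound $|V|\ge(|U|-1)^{2e}$ obtained from a regular-orbit theorem (where Proposition~\ref{hyp} and Theorem~\ref{SelfDual} are used precisely to rule out the totally isotropic configuration), and finally an explicit case analysis for the small parameter pairs $(t,e)$ with $e\in\{5,7,11\}$. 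None of this is present or replaceable by your sketch.

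Moreover, the specific plan you do give for that case cannot work as stated. You propose to ``apply the inductive hypothesis to $G/C_G(N)$ acting symplectically on $N$,'' but $N$ is an elementary abelian $q$-group with $q\neq 3$, so it is not an $\mathbb{F}_3G$-module and the theorem's hypotheses do not apply to it; the paper instead bounds the $3$-part of the action on $E/Z$ via Sylow $3$-subgroups of $Sp(2m_i,q_i)$ for arbitrary $q_i$. There is also a structural mismatch: taking $t=1$ for a single minimal normal subgroup is weaker than the paper's hypothesis that $V_N$ is homogeneous for \emph{every} normal $N$ (obtained by choosing $N$ maximal with $V_N$ non-homogeneous), and it is the latter, stronger quasi-primitivity that yields the Fitting subgroup structure on which the whole estimate rests. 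As written, the proposal proves the easy reductions and leaves the theorem's actual content unestablished.
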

\begin{proof}
We proceed by induction on $n$, let $V$ be a completely reducible, faithful, symplectic, $\mathbb{F}_3[G]$ module, with dimension $2n$. 
\\
\textbf{Step 1}:\textit{We can take $V$ irreducible}
\\
\indent Let $V=W_1\oplus ...\oplus W_d$ with $W_i$ irreducible anisotropic modules, with $2m_i=dim(W_i)$. So if $C_i=C_G(W_i)$, the $W_i$s are irreducible, faithful, symplectic $G/C_i$ modules, so by inductive hypothesis we have 
\[
log_3\left(\vert G/C_i\vert_3\right)\leq \beta_3\left(\frac{2}{3}m_i\right).
\]
Now $\bigcap C_i=1$, so $G$ is isomorphic to a subgroup of $G/C_1\times ...\times G/C_d$ and thus
\[
log_3\left(\vert G\vert_3\right)\leq\sum\beta_3\left(\frac{2}{3}m_i\right)\leq\beta_3\left(\frac{2}{3}\sum m_i\right)\leq\beta_3\left(\frac{2}{3}n\right)
\]
\\
\noindent
\textbf{Step 2}:\textit{ Suppose that $V$ is imprimitive}
\\
\indent Let $N\unlhd G$ maximal such that $V_N$ is not homogeneous, 
\[
V_N=e(V_1\oplus...\oplus V_t)=W_1\oplus...\oplus W_d,
\]
where the $W_i$s are the homogeneous components. Then $S=G/N$ permutes faithfully and primitively the homogeneous components of $V_N$. Now the homogeneous components are symplectic with respect to the induced form, and are all orthogonal to each other, by \cite[10.23]{isaacs2018characters} and \cite[2.5]{loukaki2003hyperbolic}.\\
\indent Now suppose that an homogeneous component contains an irreducible anisotropic submodule. Then since the homogeneous components are conjugated each of them contains an irreducible anisotropic submodule. Thus by using \cite[Lemma 4]{pacifici2005number} on each homogeneous component we have that $V$ is direct sum of simple non singular submodules. We can then apply the inductive hypothesis on each $W_i$, and if $N_i=N/C_N(W_i)$ we have
\[
log_3\left(\vert N_i\vert_3\right)\leq \beta_3\left(\frac{2}{3}m\right).
\]
Now doing as in step $1$ we get
\[
log_3\left(\vert N\vert_3\right)\leq d\beta_3\left(\frac{2}{3} m\right).
\]
Since $S$ is isomorphic to a subgroup of $S_d$, 
\[
log_3\left(\vert G\vert_3\right)\leq d\beta_3\left(\frac{2}{3} m\right)+\lambda_3(d)\leq \beta_3\left(\frac{2}{3} md\right)\leq \beta_3\left(\frac{2}{3} n\right).
\]
\indent If no homogeneous component contains an irreducible anisotropic submodule then by \ref{hyp} we have that $e$ is even but by \ref{SelfDual} $e$ must be odd, so we have a contradiction.
\\
\textbf{Step 3}: \textit{Suppose finally that $V$ is primitive}
\\
\indent If $V$ is primitive then every normal abelian subgroup of $G$ is cyclic, $F(G)=EU$ with $U=Z(F(G))$ cyclic, all the Sylow subgroups of $E$ are extraspecial or have prime order and $E\bigcap U=Z=Soc(U)$.
\\
\indent If $E=Z$, then the Fitting subgroup of $G$ is cyclic and since $G$ is solvable we know that $C_G(F(G))\leq F(G)$, i.e $C_G(F(G))=F(G)$. Now $V_{F(G)}$ is homogeneous, in particular simple by \cite[2.2]{manz1993representations}.\\
\indent So by \cite[2.1]{turull1983supersolvable} $V$ can be identified with $\mathbb{F}_{3^{2n}}^{+}$ and $G$ with a subgroup of the semidirect product between $\mathbb{F}_{3^{2n}}^{\times}$ and $\textit{Gal}\left(\mathbb{F}_{3^{2n}}\vert \mathbb{F}_{3}\right)$, with $G\bigcap \mathbb{F}_{3^{2n}}^{\times}=F(G)$. But then $\vert G/F(G)\vert$ is cyclic and its order divides $2n$. However $\vert G\vert$ is odd, so $\vert G/F(G)\vert\leq n=dim(V)/2$, thus
\[
log_3(\vert G/F(G)\vert_3)\leq\lambda_3(n)=\beta_3\left(\frac{2}{3}n\right).
\]
\\
\indent We can then suppose $E> Z$. Then $E/Z=Q_1/Z\times ...\times Q_k/Z$, and $e^2=\vert E:Z\vert=q_1^{2m_1}...q_k^{2m_k}$. Now if $A=C_G(Z)$ then $A/C_A(Q_i/Z)\lesssim Sp(2m_i,q)$ and if we consider $C_i=C_A(Q_i/Z)$, then by \cite[1.3]{wolf1984sylow} we have
\[
log_3(\vert A:C_i\vert_3)\leq \beta_3\left(q_i^{m_i}\right).
\]
Now we have that $C=\bigcap C_i=F(G)$.\\
\indent In fact $E\leq C$ and $U\leq C$ imply $F(G)\leq C$. $U$ is the centralizer of $E$ in $G$, thus $C/U$ acts faithfully on $E$, centralizing both $E/Z$ and $Z$. But $E/Z$ is abelian, so $\vert C/U\vert $ divides $\vert E:Z\vert$ by \cite[2.1]{wolf1982solvable}, thus $C/U=E/Z$, i.e. $C=F(G)$.\\
\indent Now by \cite[1.3]{wolf1984sylow}
\[
log_3(\vert A:F(G)\vert_3)\leq \sum \log_3(\vert A:C_i\vert_3)\leq \beta_3\left(\sum q_i^{m_i}\right).
\]
On the other hand $\vert G:A\vert\leq(\vert Z\vert-1)/2$, since $G/A$ embeds in $Aut(Z)$. Let then $s$ be the maximum integer such that $3^s\leq (\vert Z\vert-1)/2$, so we can write:
\[
log_3(\vert G:F(G)\vert_3)\leq\beta_3\left(\sum q_i^{m_i}\right)+s.
\]
Now from \cite[2.2]{yang2011regular} we have that $\vert V\vert = \vert W\vert^{eb}$, where $W$ is an irreducible $U-$module such that the action of $U$ is fixed point free. Obviously $\vert W\vert\geq \vert U\vert-1$. \\
\indent If $b> 1$ then $\vert V\vert\geq (\vert U\vert-1)^{2e}$. Otherwise $V_{U}$ is sum of $e$ irreducible submodules all isomorphic to $W$.\\
If there is an irreducible non singular submodule $M\leq V_{U}$ then $M\simeq W$ and thus $U$ embeds in $Sp(2m,3)$, with $2m=dim(W)$. So by \cite[1.1]{espuelas1991regular}
\[
\vert V\vert\geq \vert W\vert\geq\left( \vert U\vert-1\right)^{2e}.
\]
If every irreducible submodule of $V_{U}=eW$ is totally isotropic then we have a contradiction. In fact by \ref{hyp} $e$ is even, but $e\mid\vert G\vert$ so it would be odd.
\\
\indent We can then say that $\vert V\vert\geq (\vert U\vert-1)^{2e}$, and let $t$ be the minimum integer such that $3^t\geq (\vert U\vert-1)^2$, so $2n\geq te$. We remark that $\vert U\vert\geq 5$, so $t\geq 3$.\\
\indent If $e$ is not a prime power then since $a+b+c\leq abc/6$ for $a\geq 5,b\geq 7,c\geq 3$, we have 
\[
\begin{split}
log_3(\vert G:F(G)\vert_3)&\leq\beta_3\left(\sum q_i^{m_i}\right)+s\leq\beta_3\left(\sum q_i^{m_i}\right)+t/2\leq \beta_3\left(\sum q_i^{m_i}\right)+\beta_3\left( t\right)\\
                          &\leq \beta_3\left(\sum q_i^{m_i}+t\right)\leq \beta_3\left(et/6\right)\leq\beta_3\left( \frac{2}{3}n\right).
\end{split}
\]
Otherwise if $e=q^m$ is a prime power we observe that if $q^m+t\geq 13$ and $t,q^m\geq 5$ it holds $q^m+t\leq q^mt/3=et/3$, thus the theorem is proved in this case.\\
\indent Now we only need to check a few cases, i.e. the pairs $(t,q^m)$ that satisfy $t+q^m< 13$. In particular since $e\in \left\lbrace 5,7,11\right\rbrace,$ $t$ cannot be greater than $8$. Thus the cases we need to check are $(t\leq 7,5)$,$(t\leq 5,7)$, $(1,11)$. Moreover since $3^t\geq (\vert U\vert-1)^2$ we can exclude $(1,11)$.
\\
\indent Case $e=7, t\leq 5$. Since $7\vert e$ we have that $7$ is a prime divisor of $\vert U\vert$, so we can exclude the cases $t\leq 3$. If $t=5$, then $3^5\geq (\vert U\vert-1)^2\geq 3^4$, i.e $(\vert U\vert-1)/2\leq 8$, so $s=1$. Since $2n\geq et\geq 28$, then $\beta_3\left(\frac{2}{3}n\right)\geq \beta_3(28/3)=4+1$ and keeping in mind $\beta_3(7)=4$ it follows 
\[
\beta_3\left(\frac{2}{3}n\right)\geq 5\geq s+\beta_3(7)\geq log_3(\vert G:F(G)\vert_3).
\]
\\
\indent Case $e=5, t\leq 7$. Now $3^7\geq (\vert U\vert-1)^2$ thus $\vert U\vert$ can be equal to $5,25,35$. $Z$ is the socle of $U$ so $\vert Z\vert=35$ or $\vert Z\vert=5$. In particular $s=0$ or $s=2$, then $s\leq 2$. If $2n\geq 35$, i.e $t=7$, the following holds 
\[
\beta_3\left(\frac{2}{3}n\right)\geq 6\geq 4\geq 2+\beta_3(5)\geq s+\beta_3(e)\geq log_3(\vert G:F(G)\vert_3).
\]
If $t\leq 6$ then $(35-1)^2> 3^6$, so $\vert U\vert =5$ or $\vert U\vert=25$. In particular $s=0$, then
\[
\beta_3(2n/3)\geq 2=\beta_3(e)+0\geq log_3(\vert G:F(G)\vert_3).
\]
\end{proof}

\section{Main results}\label{sectchars}
In this section we finally deal with the degrees of primitive characters. We remind the definition of a primitive character.
\begin{defn}
We say that a character $\chi$ is primitive if it cannot be induced from any proper subgroup of $G$.
\end{defn}
For the benefit of the reader we state a couple of results that will be used.
\begin{thm}[\cite{wolf1984sylow},1.9]\label{Wolf}
Let $V$ be a faithful $\mathbb{F}_p[G]-$module. with $G$ $p-$solvable. Assume further that $O_p(G)=1$ and let $Q$ be a Sylow $p-$subgroup of $G$. Then $\vert Q\vert\leq \vert V\vert^{\alpha}/\sqrt[p-1]{p}$, where 
\begin{equation}
\alpha=
\begin{cases*}
p/(p-1)^2 & if $p$ is a Fermat prime \\
1/(p-1) & otherwise
\end{cases*}
\end{equation} 
\end{thm}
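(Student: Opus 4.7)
The plan is to induct on $\dim V$, reducing successively to the irreducible case and then to the primitive irreducible case, and in the latter to exploit the structure theorem for the Fitting subgroup of a primitive linear group. At each step the goal is to bound $|Q|_p = |G|_p$ by a power of $|V|$ small enough to absorb the correction factor $p^{1/(p-1)}$.

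First I would handle the completely reducible case. If $V = V_1 \oplus V_2$ with nontrivial $G$-submodules $V_i$, set $C_i = C_G(V_i)$; faithfulness of $V$ gives $C_1 \cap C_2 = 1$, so $G$ embeds in $G/C_1 \times G/C_2$. Each $G/C_i$ acts faithfully on $V_i$ with $O_p(G/C_i) = 1$, since an $O_p$ of a quotient would lift inside $O_p(G) = 1$. The induction hypothesis applied to each factor, combined with $|V_1| \cdot |V_2| = |V|$, yields a bound even stronger than claimed.

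Next I would reduce the irreducible case to the primitive one. If $V = V_1 \oplus \cdots \oplus V_k$ is a system of imprimitivity with $G$ transitively permuting the $V_i$, let $H = \mathrm{Stab}_G(V_1)$ and $K = \bigcap_{g \in G} H^g$ the kernel of the permutation action. Then $H/C_H(V_1)$ acts faithfully on $V_1$ with trivial $O_p$, so induction applies; combining this with the embedding $G/K \hookrightarrow S_k$, whose $p$-part is controlled by Legendre's formula (essentially $\lambda_p(k)$), and with the superadditivity inequalities for $\lambda_p$ and $\beta_p$ analogous to those stated in the earlier proposition, the target bound is recovered.

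In the primitive irreducible case I would invoke the standard structure theorem: every abelian normal subgroup of $G$ is cyclic; $F(G) = EU$ with $U = Z(F(G))$ cyclic and the Sylow subgroups of $E$ extraspecial or of prime order; and $G/F(G)$ acts faithfully on $E/Z(E)$, embedding into a product of symplectic groups $\mathrm{Sp}(2m_i, q_i)$ together with an action on $Z$. One then estimates the $p$-part of each such classical group factor explicitly from the order formula, while $|V|$ is bounded below by $(|U|-1)^{\dim W}$ using an irreducible $U$-submodule $W \le V$ on which $U$ acts fixed-point freely, in the spirit of Espuelas. The main obstacle is precisely this primitive case when $p$ is a Fermat prime, which forces the weaker exponent $\alpha = p/(p-1)^2$: since $|\mathrm{Sp}(2,p)| = p(p-1)(p+1)$ and $p+1$ is a $2$-power when $p-1$ is, the Sylow $p$-contribution from the symplectic factor is not cancelled by $|V| = p^2$ as efficiently as in the non-Fermat case, so the small primes $p = 3, 5, 17, \ldots$ demand a careful direct analysis with sharper inequalities of the form seen in Theorem~\ref{sylow}.
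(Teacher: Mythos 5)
This statement is imported verbatim from Wolf's 1984 paper (Theorem~1.9 there); the present paper gives no proof of it, so your attempt can only be compared with the known argument. Your reduction scheme --- completely reducible $\Rightarrow$ irreducible $\Rightarrow$ primitive, via $C_1\cap C_2=1$ for a direct sum and via $G/K\hookrightarrow S_k$ with the $p$-part controlled by $\lambda_p(k)\le (k-1)/(p-1)$ for an imprimitivity decomposition --- is indeed the skeleton of Wolf's proof, and those two reductions go through as you describe (you should also justify the very first reduction, namely that $V$ may be assumed completely reducible: the kernel of the action of $G$ on $\bigoplus_i J^iV/J^{i+1}V$ stabilizes a chain of submodules, hence is a $p$-group, hence is trivial because $O_p(G)=1$, and this completely reducible module has the same order as $V$).

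The genuine gap is the primitive case, which is where essentially all of the content of the theorem lives and which you do not actually carry out. Saying that one ``estimates the $p$-part of each symplectic factor from the order formula'' and that the Fermat primes ``demand a careful direct analysis'' identifies the difficulty without resolving it: the dichotomy between the exponents $p/(p-1)^2$ and $1/(p-1)$ has to be extracted from a quantitative comparison of $\vert\mathrm{Sp}(2m_i,q_i)\vert_p$, the contribution of $G/C_G(Z)\hookrightarrow \mathrm{Aut}(Z)$, and a lower bound for $\vert V\vert$ coming from the fixed-point-free action of the cyclic group $U$; your version of that lower bound, $\vert V\vert\ge(\vert U\vert-1)^{\dim W}$, is not even dimensionally consistent (the exponent should be governed by $e$ with $e^2=\vert E:Z(E)\vert$, as in $\vert V\vert=\vert W\vert^{eb}$ with $\vert W\vert\ge\vert U\vert-1$). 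Your heuristic for why Fermat primes are worse is also garbled: the sharp configurations involve $\mathrm{Sp}(2m,2)$ acting on an extraspecial $2$-group with $V$ defined over $\mathbb{F}_p$ (the minimal example being $Q_8\rtimes C_3$ on $\mathbb{F}_3^2$, where $\vert Q\vert=3=9^{3/4}/3^{1/2}$ exactly), not $\mathrm{Sp}(2,p)$, and $p+1$ is not a $2$-power for a Fermat prime $p\ge 5$. Step~3 of the paper's Theorem~\ref{sylow} is the analogous primitive-case analysis in the much easier setting of odd order and $p=3$, and it still requires a page of explicit estimates and case checking on the pairs $(t,e)$; without the corresponding work here your proposal is a correct roadmap to Wolf's theorem, but not a proof of it.
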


\begin{thm}[\cite{isaacs2006fixed},Theorem A]\label{Isaacs}
Let $V$ be a completely reducible $\mathbb{F}[G]$-module, where $\mathbb{F}$ is any field. Suppose that $C_V(G)=0$ and let $p$ be the smallest prime divisor of $\vert G\vert$. Then there exists $g\in G$ such that 
\begin{equation}
dimC_V(g)\leq \frac{1}{p} dimV.
\end{equation}
\end{thm}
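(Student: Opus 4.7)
The plan is to prove the theorem by induction on $\dim V$, using Clifford-theoretic reductions analogous to those in the proof of Theorem \ref{sylow}. First, since $C_V(G)=0$ and $V$ is completely reducible, no irreducible constituent of $V$ is the trivial module. Moreover, after passing to $G/\ker(G\to \mathrm{GL}(V))$ one may assume $G$ acts faithfully, since the smallest prime divisor of the quotient order is still at least $p$, so a bound on $\dim C_V(g)$ for the quotient transfers back to $G$.

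Next I would apply Clifford theory using a minimal normal subgroup $N\unlhd G$. If $V_N$ is not homogeneous, the homogeneous components are transitively permuted by $G$; I would look for $g\in G$ whose induced permutation on the set of components fixes few of them, since components moved by $g$ contribute nothing to $C_V(g)$. Combined with the inductive hypothesis applied to the stabiliser of a component acting on that component, this yields the required bound. If $V_N$ is homogeneous, one reduces to the absolutely irreducible faithful case, where the structure of $F(G)$ is constrained in the same way as in the primitive case analysed in Theorem \ref{sylow}: a central cyclic part extended by an extraspecial-type factor.

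The crucial step is the irreducible case. Here I would pick an element $g\in G$ of order $p$ (which exists since $p\mid |G|$). After extending scalars to a splitting field of characteristic coprime to $p$, the eigenspaces of $g$ for the non-trivial $p$-th roots of unity $\zeta,\dots,\zeta^{p-1}$ all have the same dimension $d$ by Galois conjugacy, so $\dim V=\dim C_V(g)+(p-1)d$ and the required inequality is equivalent to $d\geq \dim V/p$. The main obstacle is guaranteeing that \emph{some} $g$ of order $p$ realises this: if every element of order $p$ had $1$-eigenspace strictly larger than $\dim V/p$, then summing $\dim C_V(g)$ over elements of order $p$ in a chosen Sylow $p$-subgroup $P$ would be too large, which one contradicts by exploiting $C_V(G)=0$ (in particular $C_V(P)\subsetneq V$). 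In modular characteristic one first reduces to characteristic zero by lifting via Brauer characters or decomposition matrices, after which the eigenspace argument applies verbatim.
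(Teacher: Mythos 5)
A point of orientation first: the paper does not prove this statement at all --- it is quoted verbatim as Theorem A of \cite{isaacs2006fixed} and used as a black box --- so you have set yourself the task of reproving a substantial theorem of Isaacs, Keller, Meierfrankenfeld and Moret\'o from scratch. Your reduction skeleton (induction on $\dim V$, pass to a faithful quotient, Clifford decomposition, primitive case) is the right general shape, but both of the steps that carry the real weight fail as described.

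The ``crucial step'' is broken. Over an arbitrary field $\mathbb{F}$ there is no Galois conjugacy forcing the eigenspaces of an order-$p$ element $g$ for the nontrivial $p$-th roots of unity to be equidimensional: once the field you work over contains those roots of unity, the eigenvalue multiplicities of $g$ on $V$ are unconstrained. A nontrivial one-dimensional $\mathbb{C}[C_p]$-module already violates your identity $\dim V=\dim C_V(g)+(p-1)d$. The proposed contradiction from summing fixed-space dimensions is also not an argument: the averaging formula $\dim C_V(P)=\tfrac{1}{\vert P\vert}\sum_{x\in P}\chi(x)$ involves character values, not $\dim C_V(x)$, and the hypothesis $C_V(G)=0$ gives only $C_V(P)\neq V$, which is far too weak to contradict ``every order-$p$ element has fixed space larger than $\dim V/p$''. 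Moreover the statement must cover $\mathrm{char}\,\mathbb{F}=p$ --- which is exactly how the paper uses it, with $V=E/Z(E)$ over $\mathbb{F}_p$ --- and there a Jordan-block count shows that \emph{every} element of order $p$ automatically satisfies $\dim C_V(g)\geq \dim V/p$, so your strategy would need an order-$p$ element acting freely on $V$, which need not exist; Brauer characters give no information about fixed spaces of $p$-singular elements. Finally, in the imprimitive step ``fixes few components'' is not the right condition: a component fixed by $g$ can contribute its full dimension to $C_V(g)$, and a $g$-orbit of $\ell$ components still contributes up to the dimension of one component, so what you actually need is an element of the induced transitive permutation group having at most $d/p$ orbits on the $d$ components. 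The existence of such an element in an arbitrary transitive group whose order has smallest prime divisor $p$ is itself a nontrivial theorem --- it is essentially the combinatorial core of the proof in \cite{isaacs2006fixed} --- and cannot simply be ``looked for''.
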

We also state the definition of $p-$special characters. A comprehensive discussion of this subject can be found in \cite{gajendragadkar1979characteristic} or in the more recent \cite{isaacs2018characters}.
\begin{defn}
Let $\pi$ be a set of primes, $G$ a $\pi-$separable group. We say that a character $\chi\in Irr(G)$ is $\pi-$special if $\chi(1)$ is a $\pi-$number and $o(\theta)$ is a $\pi-$number for every irreducible constituent $\theta$ of the restriction $\chi_S$ for every subnormal subgroup $S\lhd\lhd G$. If $\pi=\left\lbrace p\right\rbrace$ we say that $\chi$ is a $p-$special character.
\end{defn}
\begin{thm}\label{MAIN}
Let $\chi$ be an irreducible primitive character of an odd order group $G$, $p$ a prime dividing $\vert G\vert$. Then there is a $g\in G$ such that $\left(\chi(1)\right)_p$ divides $\vert cl_G(g)\vert_p$.
\end{thm}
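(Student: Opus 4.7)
The plan is to follow the strategy of \cite{isaacs2006fixed}, reducing to the case of a faithful $p$-special primitive character and then exploiting the symplectic structure of the Fitting factor together with Theorem~\ref{sylow}.

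First I would invoke the Gajendragadkar decomposition of primitive characters in a $\pi$-separable group, which applies since an odd order group is solvable: write $\chi=\alpha\beta$, with $\alpha$ a $p$-special and $\beta$ a $p'$-special primitive character. Since $\chi(1)_p=\alpha(1)$, it suffices to prove the conclusion for $\alpha$, so we may assume $\chi$ has $p$-power degree. Factoring through $\ker\chi$ preserves both $\chi(1)_p$ and the $p$-parts of the relevant conjugacy class sizes (since $|\textnormal{cl}_{G/N}(\bar g)|$ divides $|\textnormal{cl}_G(g)|$), so we may further assume $\chi$ is faithful; because $\chi$ is $p$-special, $O_{p'}(G)\leq\ker\chi=1$, and hence $F(G)=O_p(G)=:P$ is a $p$-group.

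Next I would analyse the structure at the Fitting level. Faithfulness and primitivity of $\chi$ force $Z(G)=Z(\chi)$ to be cyclic and contained in $Z(P)$. The restriction $\chi_P=e\theta$ for a $G$-invariant faithful $\theta\in\textnormal{Irr}(P)$, and $\theta(1)^2$ divides $[P:Z(P)]$. Commutation in $P$ induces a $G/P$-invariant alternating form on a suitable elementary abelian section $V$ of $P$, making $V$ a faithful symplectic $\mathbb{F}_p[G/P]$-module of dimension $2\log_p\theta(1)$. For $p=3$ I would decompose $V$ using Proposition~\ref{hyp}: the hyperbolic part has controlled odd-order isometries, while the anisotropic part fits the hypotheses of Theorem~\ref{sylow}, producing a bound of the form $\log_3|G/P|_3\leq\beta_3\!\left(\tfrac{2}{3}\log_3\theta(1)\right)$ up to the hyperbolic contribution. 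For primes $p\neq 3$, Theorem~\ref{Wolf} plays the analogous role, with a power-saving exponent.

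Once the Sylow $p$-part of $G/F(G)$ is controlled in terms of $\chi(1)$, I would produce the required element $g$. Applying Theorem~\ref{Isaacs} to the module $V$, and using that the smallest prime divisor of $|G|$ is at least $3$, gives $g\in G$ with $\dim C_V(g)\leq\tfrac{1}{3}\dim V$. Pulling $g$ back to $G$ and estimating $|C_G(g)|_p$ via the product of $|Z(P)|$, $|C_V(g)|$ and $|C_{G/P}(g)|_p$, elementary arithmetic with the functions $\beta_3$ and $\lambda_3$ shows that $\chi(1)_p=e\cdot\theta(1)$ divides $|G|_p/|C_G(g)|_p=|\textnormal{cl}_G(g)|_p$.

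The genuine obstacle is the prime $p=3$: since $3$ is a Fermat prime, the blanket bound of Theorem~\ref{Wolf} is too weak by a constant factor and the inequality simply does not close, which is why Section~\ref{simplsect} is devoted to the sharper estimate of Theorem~\ref{sylow}. Two subsidiary points will need care: first, verifying that the symplectic module $V$ produced by the Clifford reduction is actually a direct sum of anisotropic pieces, which is where Theorem~\ref{SelfDual} (via Proposition~\ref{hyp}) intervenes to rule out homogeneous totally isotropic components in odd characteristic; and second, ensuring that once the hyperbolic components are peeled off their contribution to the Sylow count is absorbed by the sub-additivity properties of $\beta_3$ listed in Proposition~1. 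Once those points are handled, the closing inequality is a purely numerical check.
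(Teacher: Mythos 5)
Your overall architecture matches the paper's proof: reduce to a faithful $p$-special primitive character via the Gajendragadkar factorization, identify $F(G)=EZ(G)$ with $E$ extraspecial, pass to the symplectic module $V=E/Z(E)$ over $\mathbb{F}_p$, control $\vert G:F(G)\vert_p$ by Theorem \ref{Wolf} for $p\geq 5$ and by Theorem \ref{sylow} for $p=3$, and produce the element $g$ via Theorem \ref{Isaacs}, using that the smallest prime divisor of $\vert G\vert$ is at least $3$ so that $\vert cl_G(g)\vert_p\geq\vert V\vert^{2/3}\geq\chi(1)$. You also correctly diagnose why $p=3$ is the hard case (it is a Fermat prime, so the exponent in Theorem \ref{Wolf} is $3/4$ and the inequality does not close).

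The one genuine gap is your treatment of the hypothesis of Theorem \ref{sylow}, namely that $V$ be a direct sum of irreducible \emph{anisotropic} modules. You propose to split $V$ into a hyperbolic part and an anisotropic part via Proposition \ref{hyp}, bound the isometries of the hyperbolic part separately, and invoke Theorem \ref{SelfDual} to rule out totally isotropic components. Neither half of this works: Proposition \ref{hyp} and Theorem \ref{SelfDual} are internal to the proof of Theorem \ref{sylow} (they are played against each other on the parity of the multiplicity $e$ in the imprimitive and primitive cases there) and do not establish anisotropy of an arbitrary symplectic module; and the paper supplies no bound for the $3$-part of an odd-order isometry group of a hyperbolic module sharp enough to substitute for $\beta_3\left(\frac{2}{3}n\right)$ --- the generic exponent $\frac{3}{4}\dim V$ from Theorem \ref{Wolf} gives only $\chi(1)\leq\vert V\vert^{7/8}$, which is too weak. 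The correct argument is short and specific to the extraspecial situation: if $N/Z$ were a totally isotropic irreducible submodule of $V=E/Z$, then $[N,N]=1$ because the form is induced by commutation, so $N$ is an abelian normal subgroup of $G$, hence cyclic and central by primitivity of $\chi$, forcing $N\leq E\cap Z(G)=Z$. Thus every irreducible submodule of $V$ is anisotropic, there is no hyperbolic part to peel off, and Theorem \ref{sylow} applies directly. With that substitution your argument closes.
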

\begin{proof} Since, by \cite[2.9]{isaacs2018characters}, primitive characters are factorizable as a product of a (primitive)$p-$special character and a (primitive)$p'-$special character we can assume that $\chi$ is a $p-$special character. We can also assume that $\chi$ is faithful and so $O_{p'}(G)=1$ and $F(G)=O_p(G)$.\\
\indent Now since $G$ has a primitive faithful character, every normal abelian subgroup of $G$ is cyclic and central, thus there is a characterization of the Fitting subgroup: $F(G)=EZ(G)$ is the central product of $E$, an extraspecial $p-$group and the centre of the group. Moreover $V=E/Z(E)$ is a completely reducible, faithful, symplectic $\mathbb{F}_pG/F(G)$ module.\\
\indent Now if $P$ is a Sylow $p-$subgroup of $G$, the restriction map $\chi\mapsto \chi_P$ maps injectively the $p-$special characters of $G$ to the irreducible characters of $P$. Thus
\begin{equation}\label{nilp}
\chi(1)\mid\vert P:Z(P)\vert^{1/2}.
\end{equation}
Moreover $Z(P)=Z(F(G))=Z(G)$, in fact since $G$ is solvable $Z(P)\subseteq C_G(F(G))\subseteq Z(G)\subseteq O_p\subseteq P$.
\\
\indent Now by (\ref{nilp}), $\chi(1)$ divides 
\[
\vert P:F(G)\vert^{1/2}\vert F(G):Z(G)\vert^{1/2}=\vert P:F(G)\vert^{1/2} \vert E:Z\vert^{1/2}.
\]
We observe that $G$ acts fixed point freely on $V$.\\
\indent In fact if $xZ\in C_V(G)$, $x\in E$, then $x^gZ=xZ$ for all $g\in G$. Thus $\langle x\rangle Z$ is a normal subgroup of $G$. In particular $\langle x\rangle Z$ is also abelian, thus cyclic and central, that implies $x\in Z$ and $xZ=Z$. \\
\indent Now if we use Theorem \ref{Wolf} with $G:=G/F(G)$, $V=E/Z(E)$ and $Q$ a $p-$Sylow subgroup of $G/F(G)$, then $\vert Q\vert=\vert P:F(G)\vert$, and for $p\geq 5$ $\vert P:F(G)\vert\leq \vert V\vert^{1/3}$. Thus $\chi(1)\leq \vert E:Z\vert^{1/6+1/2}=\vert E:Z\vert^{2/3}$. Since the minimum prime that divides $\vert G\vert$ is at least $3$, if $g$ whose existence is guaranteed by Theorem \ref{Isaacs}, we have $\vert cl_G(g)\vert_p\geq V^{2/3}\geq \chi(1)$.
\\
\indent We only need to prove the Theorem in the case $p=3$. Now the module $V$ is the direct sum of irreducible anisotropic modules, that inherit the symplectic form induced by the commutator.\\
In fact if an irreducible module $N/Z$ were totally isotropic then for all $a,e\in N$ we would have $\langle aZ,eZ\rangle =0$, i.e. $[a,e]=1$. Thus $N$ would be abelian, but then it would be central and thus $N\leq E\bigcap Z(G)=Z$.\\
\indent So we may apply Theorem \ref{SelfDual} to $G/F(G)$ and $V$, thus we have 
\[
\vert G:F(G)\vert_3\leq 3^{\beta_3\left( \frac{2}{3}n\right)}=3^{\beta_3(dim(V)/3)}\leq 3^{dim(V)/4}=\vert V\vert^{1/4}
\]
and then 
\[
\chi(1)\leq \vert V\vert^{7/12}\leq \vert V\vert^{2/3}\leq \vert cl_G(g)\vert_3
\]
\end{proof}
\begin{cor}\label{SylowAbNor}
If $\vert G\vert=p^aq^b$ is odd, and $G$ has a normal or abelian Sylow subgroup, then for all primitive characters $\chi\in Irr(G)$ there is $g\in G$ such that $\chi(1)$ divides $\vert g^G\vert$.
\end{cor}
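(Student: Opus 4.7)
The plan is to dispatch the two hypotheses separately and in each case reduce to a result already at our disposal: the Proposition for meta-nilpotent groups stated in the Introduction, or Theorem~\ref{MAIN} itself. First suppose $G$ has a normal Sylow subgroup, say a normal Sylow $p$-subgroup $P$. Then $G/P$ has order $q^b$ and is therefore nilpotent, so $P\subseteq F(G)$ and $G/F(G)$ (a quotient of $G/P$) is nilpotent. Hence $G$ is meta-nilpotent with $\pi(G)=\{p,q\}$ and the Proposition yields the desired $g\in G$ directly.

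Now suppose $G$ has an abelian Sylow subgroup, taken to be the Sylow $p$-subgroup $P$. By Gajendragadkar's theorem \cite[2.9]{isaacs2018characters}, write $\chi=\alpha\beta$ with $\alpha$ primitive $p$-special and $\beta$ primitive $q$-special. The central claim is that $\alpha(1)=1$. To see this, replace $G$ by $G/\ker\alpha$ and assume $\alpha$ faithful; the Sylow $p$-subgroup remains abelian. As in the proof of Theorem~\ref{MAIN}, faithfulness of the $p$-special $\alpha$ forces $O_{p'}(G)=1$ and $F(G)=O_p(G)$, which is a $p$-group contained in the abelian $P$, hence itself abelian. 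Since $\alpha$ is primitive faithful, every normal abelian subgroup of $G$ is cyclic and central, so $F(G)\subseteq Z(G)$. Combined with $C_G(F(G))\subseteq F(G)$ (valid for the solvable $G$), this forces $G=F(G)$, i.e. $G$ is nilpotent; together with $O_{p'}(G)=1$ this gives $G=P$, hence $G$ abelian and $\alpha$ linear. Therefore $\chi(1)=\beta(1)$ is a $q$-power, and a single application of Theorem~\ref{MAIN} with the prime $q$ delivers $g\in G$ with $\chi(1)=\chi(1)_q$ dividing $\vert cl_G(g)\vert_q$, hence dividing $\vert cl_G(g)\vert$.

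The only delicate step is the structural argument in the second case showing $\alpha(1)=1$, where the abelian Sylow hypothesis combined with the ``normal abelian subgroups are cyclic and central'' property of primitive faithful characters collapses the quotient $G/\ker\alpha$ to its abelian Sylow $p$-subgroup. The two-prime hypothesis $\vert G\vert=p^aq^b$ is essential: it lets us conclude $\chi(1)$ is a prime power, so that Theorem~\ref{MAIN} (which controls only one prime at a time) suffices in a single application.
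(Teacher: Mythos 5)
Your proof is correct, but it takes a genuinely different route from the paper in both cases; the paper handles the whole Corollary in three lines by showing that $\chi(1)$ is a prime power and then applying Theorem \ref{MAIN} once. For a normal Sylow $p$-subgroup $P$, the paper notes that the $q$-special factor $\beta$ restricts irreducibly to $P$, hence is linear, so $\chi(1)=\alpha(1)$ is a $p$-power; you instead observe that $G$ is meta-nilpotent and invoke the Proposition on meta-nilpotent two-prime groups. That is logically sound (that Proposition is proved independently of this Corollary, via Theorem \ref{MAIN} and Proposition \ref{CentralizerPFactor}), but it is far heavier machinery than needed and it appeals to a result stated \emph{after} the Corollary in the paper, so the order of presentation would have to be rearranged. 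For an abelian Sylow $p$-subgroup, the paper simply notes that $p\nmid\chi(1)$, which follows because the restriction of the $p$-special factor $\alpha$ to a Sylow $p$-subgroup is irreducible and hence linear when that Sylow subgroup is abelian; your structural argument (pass to $G/\ker\alpha$, get $O_{p'}=1$, $F(G)=O_p(G)$ abelian hence cyclic and central, hence $G=F(G)$ is an abelian $p$-group) reaches the same conclusion correctly but at greater length, and it is a nice self-contained alternative that avoids quoting Gajendragadkar's restriction theorem. The only small gap is the degenerate case where $G$ is itself a $p$- or $q$-group: the meta-nilpotent Proposition as stated requires $\pi(G)$ to consist of exactly two primes, so you should dispose of that case separately (trivially, since primitive characters of nilpotent groups are linear).
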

\begin{proof}
Let $\chi=\alpha\beta$ be a factorization of $\chi$ as a product of respectively a $p-$special character and a $q-$special. If $G$ has a normal Sylow $p-$subgroup $P$, then $\vert G:P\vert=q^b$ and $\beta_P$ is irreducible, hence is linear and $\chi(1)$ is a prime power.\\
\indent If $G$ has an abelian Sylow $p-$subgroup then $p\nmid \chi(1)$, and then $\chi(1)$ is again a prime power.\\
\indent We can now apply Theorem \ref{MAIN}.
\end{proof}
The following is an easy corollary of Theorem \ref{Isaacs} in the case $V$ is a module of mixed characteristic.
\begin{prop}\label{CentralizerPFactor}
Let $V$ be  a finite $G-$module of mixed characteristic, $G$ odd order group, $V=\hat{P}_1\oplus\hat{P}_2$, with $\hat{P}_i$ non trivial $\mathbb{F}_{p_i}G-$modules and $\pi(\vert G\vert)=\left\lbrace p_1,p_2 \right\rbrace$. If $K\unlhd G$, $C_V(K)=0$ and $V$ is $K-$completely reducible, then every coset of $K$ contains a $t$ such that $dim(C_{\hat{P}_i}(t))\leq \frac{1}{p_{i}}dim(\hat{P}_i)$.
\end{prop}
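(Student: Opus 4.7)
I view this proposition as a coset-localized, two-prime refinement of Theorem~\ref{Isaacs}. The first step is to restrict to each summand: because $V=\hat P_1\oplus\hat P_2$ is a $G$-stable decomposition, $C_V(K)=0$ forces $C_{\hat P_i}(K)=0$ for each $i$, and $K$-complete reducibility of $V$ passes to each $\hat P_i$ viewed as an $\mathbb{F}_{p_i}K$-module. This reduces the question to producing, within a given coset $gK$, a single element $t$ whose centralizer on each $\hat P_i$ is small.

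Fix a coset $gK$. For each $i$ the goal is to show that the ``bad'' set
\[
B_i(g)=\{t\in gK:\dim C_{\hat P_i}(t)>\tfrac{1}{p_i}\dim\hat P_i\}
\]
is a proper subset of $gK$, and in fact that $|B_i(g)|/|gK|$ is controlled. The argument should mimic the proof of Theorem~\ref{Isaacs} from \cite{isaacs2006fixed}: since $C_{\hat P_i}(K)=0$ and $\hat P_i$ is $K$-completely reducible, one produces a vector $v\in\hat P_i$ whose $K$-stabilizer is as small as possible, and then bounds how many $t\in gK$ can fix $v$; via orbit-stabilizer this cascades to the required density estimate on $B_i(g)$. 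One then combines the two bounds with a union argument: because $|G|$ is odd we have $p_1,p_2\geq 3$, so the contributions from $B_1(g)$ and $B_2(g)$ leave a nonempty complement in $gK$, and any element of that complement is the desired $t$.

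The main obstacle is this quantitative coset-localization. Theorem~\ref{Isaacs} as stated is a pure existence result on the whole group, whereas what is needed is both a density version, bounding how many elements fail the inequality, and the guarantee that such elements occur inside every prescribed coset of $K$. The hypothesis $C_V(K)=0$ is the key leverage: it ensures that when the Isaacs-type argument is run at the level of the normal subgroup $K$, enough ``regular'' vectors are available, and left-multiplication by $g$ transports the conclusion from $K$ onto the coset $gK$ without loss.
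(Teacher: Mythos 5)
There is a genuine gap, and it lies exactly where you flag ``the main obstacle'': the density estimate on the bad sets $B_i(g)$ is never established, and in fact no such estimate is available. Theorem~\ref{Isaacs} (and its coset-wise refinement, which is what the paper actually invokes) only guarantees the \emph{existence} of one good element per coset for a \emph{single} module; it says nothing about the proportion of good elements, so $|B_i(g)|$ could a priori be as large as $|gK|-1$ and the union bound $|B_1(g)|+|B_2(g)|<|gK|$ has no justification. The paper's own example immediately after the proposition (the Klein four group acting on two $3$-dimensional modules) shows concretely that the two bad sets can cover an entire coset, so any argument that merely intersects the two ``good'' sets must break; the oddness of $|G|$ does not by itself repair this, because nothing ties $p_1,p_2\geq 3$ to the sizes of $B_1(g)$ and $B_2(g)$. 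The closing claim that ``left-multiplication by $g$ transports the conclusion from $K$ onto the coset $gK$'' is also unsubstantiated: $\dim C_{\hat P_i}(gk)$ bears no simple relation to $\dim C_{\hat P_i}(k)$.

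The paper's proof takes a structurally different route that sidesteps the need for any density statement. It inducts on $\dim V$, reduces to $G=K\langle t\rangle$ with $K$ faithful, picks a minimal normal subgroup $N\leq K$ of $G$, and splits $V=C_V(N)\oplus[V,N]$ as $G$-modules. The decisive point is coprimality in mixed characteristic: $N$ is an elementary abelian $p_1$-group, so by complete reducibility it acts trivially on the $\mathbb{F}_{p_1}$-part, forcing $[V,N]\cap\hat P_1=0$. Hence at each stage Theorem~\ref{Isaacs} is only ever applied to a piece concentrated in a single characteristic, and the element produced for $[V,N]$ can be chosen in the coset $Nx$ of the element $x$ already found for $C_V(N)$, with $C_{C_V(N)}(nx)=C_{C_V(N)}(x)$ since $n\in N$ centralizes $C_V(N)$. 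This compatible-refinement-of-cosets mechanism is the idea your proposal is missing; without it, or without a genuinely new quantitative version of Theorem~\ref{Isaacs}, the argument does not go through.
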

\begin{proof}
We use induction on $dim(V)$. We can suppose that $G=K\langle t\rangle $ and that $K$ acts faithfully. In fact if $K$ does not act faithfully, then we may consider $K/C_K(V)\unlhd G/C_K(V)$.\\
\indent If $G/C_K(V)$ is a $p_i-$group, then $K/C_K(V)$ is nilpotent and thus acts trivially on $\hat{P}_i$, since $\hat{P}_i$ is $K-$completely reducible.
But $C_V(K)=0$ implies that $\hat{P}_i$ is trivial, and this is a contradiction. So it must be that $\pi(G/C_K(V))=\pi(G)$ and we are done by induction.\\
\indent Suppose then that $K$ acts faithfully and consider $N\leq K$ minimal normal subgroup of $G$. If $U=C_V(N)>0$, then $U< V$ and $U$ admits the action of $G$, in fact if $v\in U$ and $x\in N$, then
\[
v^{gx}=v^{x^{-1}gx}=v^{[x,g^{-1}]g}=v^{x'g}=v^g.
\]
So $V=U\oplus W$ with $[V,N]=W$ is a sum of $G-$modules. In fact if $W$ is the $N-$complement of $U$, $[V,N]\subseteq W$. But by the $N-$complete reducibility $[V,N]$ has a complement, that must be $N-$invariant and thus $W=[V,N]$.
\\
\indent We remark that $C_U(K)=0$. Now if, say, $\hat{P}_1\bigcap U=0$ then by Theorem \ref{Isaacs} we have that $\exists x\in Kt$ such that
\[
dim\left(C_{\hat{P}_i\bigcap U}(x)\right)\leq \frac{1}{p}dim(\hat{P}_i\bigcap U).
\] 
Suppose then now that $\hat{P}_i\cap U\neq 0$ for all the $i$s. If $K$ is a $p_j-$group, then $K$ is nilpotent and so by comprimality as before we would have $\hat{P}_j\bigcap U=0$, but this is a contradiction. \\
\indent Then we must have that $\pi(K)=\left\lbrace p_1,p_2\right\rbrace$. So by induction $\exists x\in Kt$ such that 
\[
dim\left(C_{\hat{P}_i\bigcap U}(x)\right)\leq \frac{1}{p}dim(\hat{P}_i\bigcap U).
\] 
Now $C_W(N)=0$ and we can suppose that $N$ is an elementary abelian $p_1-$group, but $W\bigcap \hat{P}_1$ is a faithful completely reducible $N/C_N(W\bigcap\hat{P}_1)-$module, and thus $C_N(W\bigcap \hat{P}_1)=N$, i.e. $W\bigcap\hat{P}_1=0$. Then by Theorem \ref{Isaacs} there is $y\in Nx\subseteq Kt$ such that 
\[
dim\left(C_{\hat{P}_i\bigcap W}(y)\right)\leq \frac{1}{p}dim(\hat{P}_i\bigcap W).
\] 
But $y=nx$, so $C_U(y)=C_U(x)$ and then we are done.
\\
\indent If $C_V(N)=0$ we can replace $K$ with $N$, and suppose that it is an elementary abelian $p_1-$group. But then $\hat{P}_1$ is a faithful completely reducible $K/C_K(\hat{P}_1)-$module, so $C_K(\hat{P}_1)=K$, i.e. $K$ acts trivially on $\hat{P}_1$ and thus $\hat{P}_1=0$ but this is a contradiction. 
\end{proof}
We want to remark that the Proposition \ref{CentralizerPFactor}, without any restrictions on the prime factors of the order of $G$, is false, as we can see from the following example. \\
\indent Take $G=Kt=\langle k\rangle \langle t\rangle $ isomorphic to the Klein group. Take then $p_1,p_2$ odd prime numbers and $S_1,T_1$ the linear $\overline{\mathbb{F}}_{p_1}G$, $\overline{\mathbb{F}}_{p_2}G-$modules, such that if $R_S$ is the representation associated to $S_1$, $R_1:G\rightarrow \overline{\mathbb{F}}_{p_1}^*$, then $Ker(R_S)=\langle kt\rangle $ and as well $Ker(R_T)=\langle kt\rangle $. Now take $S_2,T_2$ linear $\overline{\mathbb{F}}_{p_1}G$,$\overline{\mathbb{F}}_{p_2}G-$modules such that the kernel of the associated representation is $\langle t\rangle $. \\
\indent Consider then:
\begin{align*}
V_1=2S_1\oplus S_2\\
V_2=T_1\oplus 2S_2
\end{align*}
These two modules have dimension $3$ obviously. If we look at the fixed points of the coset $Kt$ then
\begin{align*}
dimC_{V_1}(kt)=2> \frac{3}{2}\\
dimC_{V_2}(t)=2> \frac{3}{2},
\end{align*} 
so there is no element $g$ in the coset $Kt$ such that $dimC_{V_i}(g)\leq \frac{1}{2}dim(V_i)$.\\
\indent The following is the only case where we were able to prove that the degree of a primitive character divides the size of a conjugacy class, assuming that the degree is not necessarily a prime power.

\begin{prop}
Let $G$ be a meta-nilpotent group, $\vert G\vert$ odd, $\pi(G)=\left\lbrace p_1,p_2\right\rbrace$, $\chi\in$Irr(G) primitive character. Then there is $g\in G$ such that $\chi(1)$ divides $\vert cl_G(g)\vert$.
\end{prop}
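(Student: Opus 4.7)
The plan is to combine the Gajendragadkar factorization $\chi = \alpha\beta$, where $\alpha$ is primitive $p_1$-special and $\beta$ is primitive $p_2$-special (\cite[2.9]{isaacs2018characters}), with Proposition \ref{CentralizerPFactor} applied to a mixed-characteristic module, in order to locate a single element $g\in G$ that witnesses both prime-parts of $\chi(1)$. Since $\alpha(1)$ and $\beta(1)$ are coprime prime-powers, the conclusion $\chi(1)\mid\vert cl_G(g)\vert$ reduces to finding a single $g$ with $\alpha(1)\mid\vert cl_G(g)\vert_{p_1}$ and $\beta(1)\mid\vert cl_G(g)\vert_{p_2}$. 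I would first pass to $G/\ker\chi$ to make $\chi$ faithful, and dispose of the easy case where $\alpha(1)$ or $\beta(1)$ equals $1$ via Theorem \ref{MAIN}; so we may assume $\alpha(1),\beta(1)>1$.

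Next, consider the quotients $\tilde G_i := G/\ker\alpha_i$ (with $\alpha_1=\alpha$, $\alpha_2=\beta$), on which $\alpha_i$ is faithful primitive $p_i$-special. Then $O_{p_j}(\tilde G_i)=1$ for $j\neq i$ by a standard Clifford-theoretic argument, so $F(\tilde G_i)=E_iZ(\tilde G_i)$ with $E_i$ extraspecial of order $p_i^{1+2n_i}$, and $V_i:=E_i/Z(E_i)$ is a faithful, completely reducible, symplectic $\mathbb{F}_{p_i}[\tilde G_i/F(\tilde G_i)]$-module on which $\tilde G_i$ acts without non-zero fixed points (exactly as in the proof of Theorem \ref{MAIN}). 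Inflating each $V_i$ through the natural surjection to a $G$-module, the sum $V:=V_1\oplus V_2$ is a $G$-module of mixed characteristic, $G$-completely reducible (submodules of each summand coincide with those of the acting quotient) and satisfying $C_V(G)=0$. Since $\pi(G)=\{p_1,p_2\}$, Proposition \ref{CentralizerPFactor} applied with $K=G$ produces a single $t\in G$ such that $\dim C_{V_i}(t)\leq\frac{1}{p_i}\dim V_i$ for both $i=1,2$.

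The final step converts these centralizer bounds into divisibility of class sizes. Write $\bar t_i$ for the image of $t$ in $\tilde G_i$. Because $E_i\unlhd\tilde G_i$ is extraspecial, the commutator count gives $\vert E_i:C_{E_i}(\bar t_i)\vert=p_i^{\dim V_i-\dim C_{V_i}(t)}\geq p_i^{(1-1/p_i)\dim V_i}$, a $p_i$-power dividing $\vert cl_{\tilde G_i}(\bar t_i)\vert$; the elementary divisibility $\vert cl_{G/N}(\bar t)\vert\mid\vert cl_G(t)\vert$ for every normal $N\unlhd G$ then transfers this divisor into $\vert cl_G(t)\vert_{p_i}$. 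Combined with the degree bound $\alpha_i(1)\leq p_i^{(2/3)\dim V_i}$ extracted from the proof of Theorem \ref{MAIN}, and with $1-1/p_i\geq 2/3$ for every odd prime, one concludes $\alpha_i(1)\mid\vert cl_G(t)\vert_{p_i}$ for $i=1,2$, and therefore $\chi(1)\mid\vert cl_G(t)\vert$. The main obstacle is this coordination step: the two extraspecial modules $V_1,V_2$ live a priori on different quotients $\tilde G_i$, and one must verify they cohabit as a single mixed-characteristic $G$-module satisfying the hypotheses of Proposition \ref{CentralizerPFactor}. The meta-nilpotent hypothesis is what allows this, through the Fitting decomposition $F(G)=O_{p_1}(G)\times O_{p_2}(G)$ and the nilpotent structure of $G/F(G)$, which provide the clean interaction between the two prime components needed to glue $V_1$ and $V_2$ into a common module on $G$.
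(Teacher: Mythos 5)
Your proposal is correct in outline, but it takes a genuinely different route from the paper at the decisive step, namely in how the degree of each special factor is bounded. The paper stays inside the single group $G$ (with $\chi$ faithful): it takes $V=E/Z\leq F(G)/Z(G)$, which splits by characteristic as $\hat{P}_1\oplus\hat{P}_2$ since $F(G)=O_{p_1}(G)\times O_{p_2}(G)$, and it is here that meta-nilpotency is used essentially: because $G/F(G)$ is nilpotent, the preimages $G_i$ of its Sylow subgroups are normal, so \cite[5.1]{isaacs1981primitive} applies to give $(\alpha_i)_{G_{i+1}}$ irreducible, whence $(\alpha_i)_{O_{p_i}}$ is irreducible and $\alpha_i(1)$ divides $\vert O_{p_i}:Z(O_{p_i})\vert^{1/2}\leq\vert\hat{P}_i\vert^{1/2}$; Proposition \ref{CentralizerPFactor} applied to this one module then finishes. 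You instead re-run the whole Theorem \ref{MAIN} machinery on each quotient $\tilde G_i=G/\ker\alpha_i$ to get $\alpha_i(1)\leq\vert V_i\vert^{2/3}$ (exponent $2/3$ rather than $1/2$, which still fits under the centralizer bound since $1-1/p_i\geq 2/3$ with equality only at $p_i=3$, where the sharper $7/12$ is available), and you glue the two inflated quotient modules into one mixed-characteristic $G$-module before invoking Proposition \ref{CentralizerPFactor}. The steps I would scrutinize are exactly the ones you flag plus one you attribute to the wrong hypothesis: the inflated sum $V_1\oplus V_2$ does satisfy $C_V(G)=0$ and $G$-complete reducibility, the non-triviality of each $V_i$ follows from $\alpha_i(1)>1$ (a faithful primitive character of a group with cyclic Fitting subgroup is linear), and the transfer $\vert E_i:C_{E_i}(\bar t_i)\vert\geq\vert V_i:C_{V_i}(\bar t_i)\vert$ together with $\vert cl_{\tilde G_i}(\bar t_i)\vert\mid\vert cl_G(t)\vert$ is sound (note your displayed equality for $\vert E_i:C_{E_i}(\bar t_i)\vert$ should be an inequality $\geq$, which is the direction you need). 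But contrary to your closing remark, meta-nilpotency plays no role anywhere in your argument --- the Fitting decomposition and the gluing need only $\pi(G)=\{p_1,p_2\}$ --- so your proof, if every detail holds, establishes the conclusion for all odd-order groups with two prime divisors, strictly more than the stated proposition. Since the author explicitly says this is the only case they could settle, you should either locate where your argument secretly needs the hypothesis or satisfy yourself that you have genuinely removed it; I could not find a gap, but that discrepancy is the one thing demanding a careful independent check.
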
 
\begin{proof}
We can take $\chi$ faithful, and so we have again the following structure of the Fitting subgroup, $F(G)=EZ(G)$. We then assume that $\chi(1)=p_1^ap_2^b$, with $a,b\geq 1$, otherwise the thesis immediately follows from Proposition \ref{MAIN}. \\
\indent Let $V=E/Z$, $V=\hat{P}_1\oplus\hat{P}_2$, with $\hat{P}_i$ $\mathbb{F}_{p_i}-$modules, even trivial. Let $\chi=\alpha_1\alpha_2$ be a factorization of $\chi$ as a product of $p_i-$specials characters. Notice that if $G_i/F(G)$ are the Sylow $p_i-$subgroups of $G/F(G)$ then by \cite[5.1]{isaacs1981primitive} we have that $(\alpha_i)_{G_{i+1}}$ is primitive and irreducible, where the indices are modulo $2$.\\
\indent But then if $O_{p_i}:=O_{p_i}(G)$ we have that $(\alpha_i)_{O_{p_i}}$ is irreducible, and then $\alpha_i(1)$ divides $\vert O_{p_i}:Z(O_{p_i})\vert^{1/2}$. That is $\chi(1)$ divides $\vert V\vert$.\\ Now if, say, $\hat{P}_1$ is trivial then $O_{p_1}$ is central and thus $\alpha_1$ is linear, but we have already excluded this occurrence. So we can take $\hat{P}_i$ non trivial modules, and then by Proposition (\ref{CentralizerPFactor}) there is an element $g\in G$ such that 
\[
\vert g^G\vert_{p_i}\geq \vert E:Z\vert_{p_i}^{2/3}\geq \vert O_{p_i}:Z(O_{p_i})\vert^{2/3}\geq \alpha_i(1),
\]
thus $\vert g^G\vert$ is divided by $\chi(1)$.
\end{proof}
\begin{ackn}
I would like to thank Silvio Dolfi, for many useful discussions and comments, and Charles Eaton, for a careful reading of the manuscript.
\end{ackn}
\bibliographystyle{siam}
\bibliography{Bibliography}

\begin{thebibliography}{10}

\bibitem{casolo2009products}
{\sc C.~Casolo and S.~Dolfi}, {\em Products of primes in conjugacy class sizes
  and irreducible character degrees}, Israel Journal of Mathematics, 174
  (2009), pp.~403--418.

\bibitem{cassell2013conjugacy}
{\sc E.~J. Cassell}, {\em Conjugacy classes in finite groups, commuting graphs
  and character degrees}, PhD thesis, University of Birmingham, 2013.

\bibitem{espuelas1991regular}
{\sc A.~Espuelas}, {\em Regular orbits on symplectic modules}, Journal of
  Algebra, 138 (1991), pp.~1--12.

\bibitem{gajendragadkar1979characteristic}
{\sc D.~Gajendragadkar}, {\em A characteristic class of characters of finite
  $\pi$-separable groups}, Journal of algebra, 59 (1979), pp.~237--259.

\bibitem{huppert1998character}
{\sc B.~Huppert}, {\em Character theory of finite groups}, vol.~25, Walter de
  Gruyter, 1998.

\bibitem{isaacs2006fixed}
{\sc I.~Isaacs, T.~Keller, U.~Meierfrankenfeld, and A.~Moret{\'o}}, {\em Fixed
  point spaces, primitive character degrees and conjugacy class sizes},
  Proceedings of the American Mathematical Society, 134 (2006), pp.~3123--3130.

\bibitem{isaacs1981primitive}
{\sc I.~M. Isaacs}, {\em Primitive characters, normal subgroups, and
  {M}-groups}, Mathematische Zeitschrift, 177 (1981), pp.~267--284.

\bibitem{isaacs2018characters}
\leavevmode\vrule height 2pt depth -1.6pt width 23pt, {\em Characters of
  solvable groups}, American Mathematical Society, 2018.

\bibitem{karpilovsky2016group}
{\sc G.~Karpilovsky}, {\em Group Representations}, vol.~4, Elsevier, 2016.

\bibitem{loukaki2003hyperbolic}
{\sc M.~Loukaki}, {\em Hyperbolic modules and cyclic subgroups}, Journal of
  Algebra, 266 (2003), pp.~34--50.

\bibitem{manz1993representations}
{\sc O.~Manz and T.~R. Wolf}, {\em Representations of solvable groups},
  vol.~185, Cambridge University Press, 1993.

\bibitem{pacifici2005number}
{\sc E.~Pacifici}, {\em On the number of anisotropic simple submodules in
  modules with a form}, Archiv der Mathematik, 84 (2005), pp.~1--10.

\bibitem{turull1983supersolvable}
{\sc A.~Turull}, {\em Supersolvable automorphism groups of solvable groups},
  Mathematische Zeitschrift, 183 (1983), pp.~47--73.

\bibitem{wolf1982solvable}
{\sc T.~R. Wolf}, {\em Solvable and nilpotent subgroups of {G}{L}(n,$q^m$)},
  Canadian Journal of Mathematics, 34 (1982), pp.~1097--1111.

\bibitem{wolf1984sylow}
\leavevmode\vrule height 2pt depth -1.6pt width 23pt, {\em Sylow p-subgroups of
  p-solvable subgroups of {G}{L}(n, p)}, Archiv der Mathematik, 43 (1984),
  pp.~1--10.

\bibitem{yang2011regular}
{\sc Y.~Yang}, {\em Regular orbits of finite primitive solvable groups, ii},
  Journal of Algebra, 341 (2011), pp.~23--34.

\end{thebibliography}
\end{document}